%  https://www.cambridge.org/core/books/abs/groups-st-andrews-1997-in-bath/embedding-some-recursively-presented-groups/A5CF8D51C4641ECB739C7A8CBDC98531
%
%  https://zbmath.org/0923.20025
\documentclass[12pt, a4paper]{amsart}

\usepackage{graphicx}
\usepackage{wrapfig}
\usepackage{comment}
\usepackage{lipsum}

%\usepackage{mathabx}
% Modified mathabx for ArXiv
%\let\savebigtimes\bigtimes
%\let\bigtimes\relax
%\usepackage{mathabx}
%\let\bigtimes\savebigtimes

%\usepackage{lmodern}
\usepackage{pifont}

%formats the links and hyperlinks
\usepackage[%hypertex,
unicode=true,
plainpages = false,
pdfpagelabels,
bookmarks=true,
bookmarksnumbered=true,
bookmarksopen=true,
breaklinks=true,
backref=false,
colorlinks=true,
linkcolor = DarkBlue,
urlcolor  = DarkBlue,
citecolor = DarkRed,
anchorcolor = green,
hyperindex = true,
hyperfigures
]{hyperref}
\hypersetup{
pdftitle={An explicit embedding of the group Q into a finitely presented group},
pdfauthor={V.H. Mikaelian},
pdfsubject={An explicit embedding of the group Q into a finitely presented group}
}

%% define colors...
\usepackage[dvipsnames]{xcolor}
\colorlet{LightRubineRed}{RubineRed!70!}
\colorlet{Mycolor1}{green!10!orange!90!}
\definecolor{DarkRed}{HTML}{cc0000}
\definecolor{ChapterHeadColor}{HTML}{cc0000}
\definecolor{PartHeadColor}{HTML}{cc0000}
\definecolor{DarkBlue}{HTML}{0000cc}
%\definecolor{QuoteColor}{HTML}{000088}
\definecolor{QuoteColor}{HTML}{665665}

\usepackage[top=2.5cm, bottom=2.5cm, outer=2.5cm, inner=2.5cm, heightrounded%, marginparwidth=1.5cm, marginparsep=0.5cm
]{geometry}

\newcommand{\E}{{\mathcal E}}

\newcommand{\Z}{{\mathbb Z}}
\newcommand{\Q}{{\mathbb Q}}

\newcommand{\bigast}{\textrm{\footnotesize\ding{91}}}
% Other star types 93, 105, 106, 107

\usepackage[bitstream-charter]{mathdesign}

% Edvardian Script ITC 
% is the script used for mathcal

% Bring back the original Mathcal font which was changed by bitstream-charter
\DeclareSymbolFont{cmsymbols}{OMS}{cmsy}{m}{n}
\SetSymbolFont{cmsymbols}{bold}{OMS}{cmsy}{b}{n}
\DeclareSymbolFontAlphabet{\mathcal}{cmsymbols}

%%%%%%%%%%%%%%%%%%%%%%%%%%%%%%%%%%%%%%%%%%%%%%
%  NEW THEOREMS{
%%%%%%%%%%%%%%%%%%%%%%%%%%%%%%%%%%%%%%%%%%%%%%

\theoremstyle{plain}
\newtheorem{Theorem}{Theorem}[section]
\newtheorem{Lemma}[Theorem]{Lemma}
  
\newtheorem{Corollary}[Theorem]{Corollary}

\theoremstyle{definition}
\newtheorem{Definition}[Theorem]{Definition}

\theoremstyle{remark}
\newtheorem{Remark}[Theorem]{Remark} 
\newtheorem{Example}[Theorem]{Example} 

\numberwithin{equation}{section}

\usepackage[bitstream-charter]{mathdesign}

% Edvardian Script ITC 
% is the script used for mathcal

% Bring back the original Mathcal font which was changed by bitstream-charter
\DeclareSymbolFont{cmsymbols}{OMS}{cmsy}{m}{n}
\SetSymbolFont{cmsymbols}{bold}{OMS}{cmsy}{b}{n}
\DeclareSymbolFontAlphabet{\mathcal}{cmsymbols}

\begin{document}

%%%%%%%%%%%%%%%%%%%%%%%%%%%%%%%%%%%%%%%%%%%%%%
%%%%%%%%%%%%%%%%%%%%%%%%%%%%%%%%%%%%%%%%%%%%%%

\subjclass{20F05, 20E06, 20E07.}
\keywords{Recursive group, finitely presented group, embedding of group, benign subgroup, free product of groups with amalgamated subgroup, HNN-extension of group}

\title[Auxiliary free constructions]{Auxiliary free constructions for explicit embeddings\\ of recursive groups}

\author{V. H. Mikaelian
%\\
%Y\lowercase{erevan} S\lowercase{tate} %U\lowercase{niversity}
}

\begin{abstract}
An auxiliary free construction  $\textstyle
\bigast_{i=1}^{r}(K_i, L_i, t_i)_M$ based on HNN-exten\-sions and on generalized free product of groups with amalgamated subgroups is suggested, and some of its basic properties are displayed. The proposed construction is a generalization of a series of structures used by Higman for embeddings of recursive groups into finitely presented groups, and also for certain structures we recently applied in the research on embeddings of recursive groups. 
Usage of this technical tool substantially simplifies some embedding methods for recursive groups.
A few technical results on specific subgroups in the suggested $\bigast$-constructions, in the HNN-extensions of groups, and in free products of groups with amalgamated subgroup are proved. The obtained properties are applied to build infinitely generated benign subgroups inside free groups of small rank.
\end{abstract}

\date{\today}

\maketitle

\setcounter{tocdepth}{1}% to get subsubsections in toc

% For  table of contents
\let\oldtocsection=\tocsection
\let\oldtocsubsection=\tocsubsection
\let\oldtocsubsubsection=\tocsubsubsection
\renewcommand{\tocsection}[2]{\hspace{-12pt}\oldtocsection{#1}{#2}}
\renewcommand{\tocsubsection}[2]{\footnotesize \hspace{6pt} \oldtocsubsection{#1}{#2}}
\renewcommand{\tocsubsubsection}[2]{ \hspace{42pt}\oldtocsubsubsection{#1}{#2}}

{\footnotesize \tableofcontents}

%%%%%%%%%%%%%%%%%%%%%%%%%%%%%%%%%%%%%
%%%%%%%%%%%%%%%%%%%%%%%%%%%%%%%%%%%%%
%%%%%%%%%%%%%%%%%%%%%%%%%%%%%%%%%%%%%
\section{Introduction}
\label{SE Introduction}

\noindent
The first objective of this article is to present an auxiliary technical $\bigast$-construction:
\begin{equation}
\label{EQ *-construction short form} 
\textstyle
\bigast_{i=1}^{r}(K_i, L_i, t_i)_M
\end{equation}
which is a ``nested'' combination of HNN-extensions of groups and of generalized free products with amalgamated subgroups. 
The definition of \eqref{EQ *-construction short form} 
will be given in Section~\ref{SU Building the *-construction}, and some of its main properties and initial applications will follow  in sections
\ref{SU Subgroups in 
bigast}, 
\ref{SU Intersections and joins of benign subgroups},
\ref{SU Free products inside}, 
\ref{SU Benign subgroups in b c and a b c}. 

Working on embeddings of recursive groups we noticed that, although certain constructions in Higman's famous work \cite{Higman Subgroups of fP groups}, or elsewhere in the related literature, often look very diverse, many of them are noting but the particular implementations of the \textit{same} general structure that can be interpreted as \eqref{EQ *-construction short form}.  
Thus, it seems to be reasonable to obtain a few general basic properties of this sturcture in order to present many of arguments used in 
\cite{Higman Subgroups of fP groups,
A modified proof for Higman, 
On explicit embeddings of Q,
An explicit algorithm} and in some other papers, as trivial consequences of those basic features.

\medskip
The second motivation for this article is that in our research of the recent years \cite{
%The Higman operations and  embeddings, 
A modified proof for Higman, 
On explicit embeddings of Q,
An explicit algorithm,
Embeddings using universal words} many of the arguments rely on certain properties of particular subgroups in HNN-extensions of groups and in free products with amalgamated subgroups. 
Thus, in order to avoid repeated inclusion of specific auxiliary results and proofs in multiple papers, we stockpile them here, in particular in Chapter~\ref{SE Subgroups in free products with amalgamation and in HNN-extensions}, to use them via references in other articles. 
This seems to be especially appropriate in the present article, since most of those proofs are also necessary to study the $\bigast$-construction, and hence their inclusion here serves two affiliated purposes at once.

%%%%%%%%%%%%%%%%%%%%%%%%%%%%%%
\subsection*{Acknowledgements}
\label{SU Acknowledgements}

In another study related to this article, we had the opportunities to discuss these constructions in detail with Prof.~O.~Bogopolski who, among other helpful points, also suggested to illustrate our most complex constructions with figures.

I am very much thankful to the SCS MES Armenia for the grant 25RG-1A187 which partially supports the current research.

%%%%%%%%%%%%%%%%%%%%%%%%%%%%%%%%%%%%%
%%%%%%%%%%%%%%%%%%%%%%%%%%%%%%%%%%%%%
%%%%%%%%%%%%%%%%%%%%%%%%%%%%%%%%%%%%%
\section{Definitions and references}

%%%%%%%%%%%%%%%%%%%%%%%%%%%%%%
\subsection{Benign subgroups}
\label{SU Benign subgroups}

A key notion used by Higman to connect group-theo\-reti\-cal concepts to recursion and computability is that of benign subgroups:

\begin{Definition}
\label{DE benign subgroup}
A subgroup $H$ of a finitely generated group $G$ is called a \textit{benign subgroup} in $G$, if $G$ can be embedded in a finitely presented group $K$ with a finitely generated subgroup $L\le K$ such that $G \cap L = H$.
\end{Definition} 

In fact, \cite{Higman Subgroups of fP groups} suggests three definitions for benign subgroup, and then proves in Lemma~3.5 that they are equivalent. That approach is very comfortable for the purposes of \cite{Higman Subgroups of fP groups}, as it allows to employ any of three definitions depending on the specific suitable technical context. 
However, we restrict ourselves to the above definition only.

\smallskip
Whenever we have to work with more than one benign subgroups $H$ in $G$, we may specify the respective groups mentioned in Definition~\ref{DE benign subgroup} via $K=K_H$ and $L=L_H$, and then write $G \cap L_H = H$, to stress the correlation of the context with $H$, see Corollary~\ref{CO intersection and join are benign multi-dimensional}, 
Example~\ref{EX benign sample ONE},
figures~\ref{FI Figure_08_KI=KJ}, \ref{FI Figure_07_KJ}, etc.

An evident example of a benign subgroup is a finitely generated subgroup $H$ in a finitely presented group $G$. Then we just have to pick $K_H=G$ and $L_H=H$.
For examples of \textit{infinitely} generated benign subgroups see Chapter~\ref{SE Infinitely generated benign subgroups}.
Other details on benign subgroups see in chapters 3, 4 in \cite{Higman Subgroups of fP groups}, check also Chapter 3 in \cite{A modified proof for Higman}.

%%%%%%%%%%%%%%%%%%%%%%%%%%%%%%%%%%%%%%%
\subsection{Free constructions, references}
\label{SU Free constructions references}

Below we use three types of free constructions: 
generalized free products of groups with amalgamated subgroups,
HNN-extensions of groups by one or more stable letters, see \cite{Lyndon Schupp, 
Bogopolski,
Rotman, 
De La Harpe 2000}, 
and also the technical $\bigast$-construction which is a nested combination of both, see Chapter~\ref{SU The star construction}, and \textit{\nameref{SE Introduction}} above.

Here is the actual notation we are going to use for free constructions.
If the groups $G$ and $H$ have subgroups $A$ and $B$ respectively, and $\varphi : A \to B$ is an isomorphism, then we denote by $G*_{\varphi} H$ the generalized free product $\langle G,\, H \mathrel{|} a=a^{\varphi} \text{ for all $a\in A$}\, \rangle$ of $G$ and $H$ with subgroups $A$ and $B$ amalgamated with respect to $\varphi$. In the simple case, when in $G$ and $H$ two groups are intersecting in a subgroup $A$, we may take $B=A$ and assume $\varphi$ is the identical isomorphism on $A$. In such a case we prefer to write $G*_{A} H$.

If the group $G$ has subgroups $A$ and $B$, and $\varphi : A \to B$ is an isomorphism, then we denote
 by $G*_{\varphi} t$ the HNN-extension $\langle G, t \mathrel{|} a^t=a^{\varphi} \text{ for all $a\in A$}\, \rangle$ of the base group $G$ 
by the stable letter $t$
with respect to the isomorphism 
$\varphi$.
In case when $A=B$, and $\varphi$ is just the identity function on $A$, we prefer to write $G*_{A} t$. We may also say that the stable letter $t$ \textit{fixes} $A$ in $G$.
The HNN-extensions with more than one stable letters also are used. If for some pairs of subgroups $A_1, B_1;\; A_2,B_2;\,\ldots$ in $G$ we have the isomorphisms $\varphi_1: A_1 \to B_1,\; \varphi_2: A_2 \to B_2,\ldots$, then  we denote the HNN-extension $\langle G, t_1, t_2,\ldots \mathrel{|} a_1^{t_1}=a_1^{\varphi_1}\!\!,\; a_2^{t_2}=a_2^{\varphi_2}\!\!,\,\ldots\; \text{ for all $a_1\in A_1$, $a_2\in A_2,\ldots$}\, \rangle$ by $G *_{\varphi_1, \varphi_2, \ldots} (t_1, t_2, \ldots)$. 
% In simplest case when $\varphi_1, \varphi_2, \ldots$ all are  the identical isomorphisms on the same subgroup $A$, we may  write $G*_{A} (t_1, t_2, \ldots)$.
%
Our notation for the \textit{normal form} in
free constructions (see Chapter~\ref{SE Subgroups in free products with amalgamation and in HNN-extensions} and Chapter~\ref{SU The star construction}) is very close to~\cite{Bogopolski}.

The definition for $\bigast$-construction \eqref{EQ *-construction short form} will be given in Chapter~\ref{SU The star construction} below.

\medskip
For general information on group theory we refer to the textbooks \cite{Robinson, Kargapolov Merzljakov, Rotman}.
And for more specific information on combinatorial group theory see \cite{Lyndon Schupp, Bogopolski, De La Harpe 2000}.

%%%%%%%%%%%%%%%%%%%%%%%%%%%%%%%%%%%%%%%
\subsection{Integer-valued sequences}
\label{SU Integer-valued sequences}

In \cite{Higman Subgroups of fP groups} the recursive functions are mostly used to operate over some integer-valued sequences interpreted as integer-valued functions.  
Let $\mathcal E$ be the set of all functions $f : \Z \to \Z$ with finite supports $\sup (f)=\big\{i\in \Z \mathrel{|} f(i)\neq 0\big\}$.
If for an $f$ there is a positive $m$ such that 
$\sup (f) \subseteq \{0,1,\ldots, m-1\}$, then 
denoting  $j_i = f(i)$ we get a \textit{sequence} $(j_0,\ldots,j_{m-1})$ which holds all the non-zero values of $f$. Since having the coordinates $f(i)$ of this sequence we can restore the function, we write it down via 
$f=(j_0,\ldots,j_{m-1})$.
Say,
$f=(0,0,5,-1,0,0,2,2)$
means that we can take $m=8$, and set
$f(2)=5$, $f(3)=-1$, 
$f(6)=f(7)=2$, and 
$f(i)=0$ for any index $i \in \Z \,{\backslash}\, \big\{2,3,6,7\big\}$.
Of course, $m$ is \textit{not} determined uniquely, and if needed, we may interpret the function $f$ as a \textit{longer} sequence by adding some extra zeros to it. Say, the above function $f$ can well be interpreted as the sequence $f=(0,0,5,-1,0,0,2,2,0,0,0,0)$ for $m=8+4=12$.
And the constant zero function $f(i)=0$ may be interpreted either as $f=(0)$ or, say, as $f=(0,0,0)$.
In the current article an application of such sequences can be found in Chapter~\ref{SE Infinitely generated benign subgroups}.

A significant part of \cite{Higman Subgroups of fP groups} consists of discussion about \textit{sets} of sequences of the above type with specific restrictions. Also, the \textit{Higman operations} $\iota, \upsilon, 
\rho,  \sigma, \tau, \theta, \zeta, \pi, \omega_m$ are being used to build new sets of sequences from the existing ones. See Chapter 2 in \cite{Higman Subgroups of fP groups} or the recent article \cite{The Higman operations and  embeddings} where the Higman operations are considered in detail. 
In fact, the Higman operations  are not used in this article and, therefore, we limit ourselves to this reference only.

%%%%%%%%%%%%%%%%%%%%%%%%%%%%%%
%%%%%%%%%%%%%%%%%%%%%%%%%%%%%%
%%%%%%%%%%%%%%%%%%%%%%%%%%%%%%
\section{Subgroups in free products with amalgamation and in HNN-extensions}
\label{SE Subgroups in free products with amalgamation and in HNN-extensions}

\noindent
The below Lemma~\ref{LE NEW subgroups in amalgamated product} 
is a slight variation of Lemma 3.1 given by Higman in \cite{Higman Subgroups of fP groups} without a proof as a fact \textit{``obvious from the normal form theorem''}, and the next 
Lemma~\ref{LE NEW subgroups in HNN-extension}
is its analog for HNN-extensions. 
For the sake of completeness of our reasoning, we prefer to prove both lemmas, and to accompany them with corollaries
\ref{CO NEW G*H lemma corollary}, 
\ref{CO NEW G*H lemma corollary case A'=1},
\ref{CO NEW G*t lemma corollary},
\ref{CO NEW G*t lemma corollary case A'=1} 
which are going to often be used in \cite{A modified proof for Higman, 
On explicit embeddings of Q,
An explicit algorithm} 
and elsewhere.

%%%%%%%%%%%%%%%%%%%%%%%%%%%%%%
\subsection{Subgroups in free products with amalgamation}
\label{SU Subgroups in free products with amalgamation}

\begin{Lemma}
\label{LE NEW subgroups in amalgamated product}
Let $\Gamma = G *_\varphi \!H$ be the free product of the groups $G$ and $H$ with amalgamated subgroups
$A \le G$ and $B \le H$
with respect to the isomorphism 
$\varphi: A \to B$.
If $G', H'$ respectively are subgroups of $G, H$, such that
for $A'=G'\cap A$ and $B'=H'\cap B$ 
we have  
$\varphi (A') = B'$, then for the subgroup $\Gamma'=\langle G',H'\rangle$  of $\Gamma$ and for the restriction $\varphi'$ of $\varphi$ on $A'$ we have:
\begin{enumerate}
\item 
\label{PO 1 LE NEW subgroups in amalgamated product}
$\Gamma' = G'*_{\varphi'} H'$,

\item 
\label{PO 2 LE NEW subgroups in amalgamated product}
$\Gamma' \cap A = A'$ and $\Gamma' \cap B=B'$,

\item 
\label{PO 3 LE NEW subgroups in amalgamated product}
$\Gamma' \cap G = G'$ and $\Gamma' \cap H = H'$.
\end{enumerate}
\end{Lemma}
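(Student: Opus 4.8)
The plan is to deduce all three statements from the normal form theorem for amalgamated free products, establishing \eqref{PO 1 LE NEW subgroups in amalgamated product} first and then obtaining \eqref{PO 2 LE NEW subgroups in amalgamated product} and \eqref{PO 3 LE NEW subgroups in amalgamated product} by a short case analysis on syllable length.

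For \eqref{PO 1 LE NEW subgroups in amalgamated product} I would construct the natural homomorphism $\psi \colon G' *_{\varphi'} H' \to \Gamma$ induced by the inclusions $G' \hookrightarrow G$ and $H' \hookrightarrow H$ followed by the canonical embeddings of $G$ and $H$ into $\Gamma$. This map is well defined because the defining relations $a' = a'^{\varphi'}$ (for $a' \in A'$) of $G' *_{\varphi'} H'$ already hold in $\Gamma$: since $\varphi'$ is the restriction of $\varphi$, we have $a'^{\varphi'} = a'^{\varphi}$, and $a' = a'^{\varphi}$ is a defining relation of $\Gamma$. The image of $\psi$ is clearly $\langle G', H' \rangle = \Gamma'$, so it remains to prove $\psi$ is injective.

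The observation that makes injectivity work is the containment $G' \setminus A' \subseteq G \setminus A$ and, symmetrically, $H' \setminus B' \subseteq H \setminus B$. This is immediate from $A' = G' \cap A$: if $g \in G'$ also lies in $A$, then $g \in G' \cap A = A'$, so any $g \in G' \setminus A'$ satisfies $g \notin A$. Consequently a reduced word $c_1 \cdots c_n$ of $G' *_{\varphi'} H'$, whose syllables lie alternately in $G' \setminus A'$ and $H' \setminus B'$, is carried by $\psi$ to a word whose syllables lie alternately in $G \setminus A$ and $H \setminus B$, i.e.\ to a reduced word of $\Gamma$ of the same length. By the normal form theorem applied in $\Gamma$, such a word is nontrivial whenever $n \ge 1$, forcing $\Ker{\psi} = \1$. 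This gives $\Gamma' = G' *_{\varphi'} H'$.

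Having identified $\Gamma'$ with the amalgamated product, I would treat \eqref{PO 2 LE NEW subgroups in amalgamated product} and \eqref{PO 3 LE NEW subgroups in amalgamated product} together. Every $x \in \Gamma'$ either lies in the amalgamated subgroup $A' = B'$ or has a reduced form of syllable length $\ge 1$, and in the latter case the normal form theorem guarantees $x$ lies in no single free factor of $\Gamma$. Hence any $x \in \Gamma' \cap G$ (and likewise $x\in\Gamma' \cap A$) has syllable length $0$ or $1$. A short case check pins it down: a single syllable in $H' \setminus B'$ would place $x \in H \setminus B$, incompatible with $x \in G$ since $G \cap H = A = B$ in $\Gamma$; a single syllable in $G' \setminus A'$ gives $x \in G'$ outright; and length zero gives $x \in A' \le G'$. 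This yields $\Gamma' \cap G = G'$ and $\Gamma' \cap A = A'$, with the statements for $H$ and $B$ following by symmetry. The main obstacle is the injectivity step of \eqref{PO 1 LE NEW subgroups in amalgamated product}: everything rests on reduced words mapping to reduced words, whose only nontrivial input is the elementary identity $G' \cap A = A'$; once that is in hand, the normal form theorem does the rest mechanically.
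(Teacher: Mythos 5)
Your proposal is correct, and it rests on the same pivotal fact as the paper's proof: since $A' = G' \cap A$ and $B' = H' \cap B$, no element of $G' \setminus A'$ or $H' \setminus B'$ can lie in the amalgamated subgroup of $\Gamma$, so words that are reduced over $(G', H')$ remain reduced in $\Gamma$. The packaging, however, is different. The paper argues from the inside out: it takes an arbitrary element of $\Gamma' = \langle G', H' \rangle$, rewrites it into a coset normal form relative to transversals $T_{A'}$ of $A'$ in $G'$ and $T_{B'}$ of $B'$ in $H'$, and then uses $A' = G' \cap A$ to extend these to transversals $T_A \supseteq T_{A'}$ and $T_B \supseteq T_{B'}$ of $A$ in $G$ and $B$ in $H$; the normal form computed inside $\Gamma'$ is then literally a normal form in $\Gamma$, hence unique, which yields (1), and the paper reads (2) and (3) off from that uniqueness with no further argument. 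You argue from the outside in: the universal property gives the surjection $\psi \colon G' *_{\varphi'} H' \to \Gamma'$, injectivity comes from the reduced-sequence version of the normal form theorem applied in $\Gamma$, and (2), (3) require your separate syllable-length case analysis --- which is in fact more detailed than what the paper supplies for those parts. Your route avoids transversals entirely, a real simplification; the paper's transversal bookkeeping buys uniqueness of normal forms inside $\Gamma'$, which is what makes its parts (2) and (3) immediate. Two small repairs to your write-up: the phrase asserting that a reduced form of syllable length $\ge 1$ ``lies in no single free factor of $\Gamma$'' should say length $\ge 2$ (a one-syllable reduced word does lie in a factor; your case analysis handles length $1$ correctly, so this is only a slip of wording), and in the injectivity step you should add the one-line case of a nontrivial element of the amalgamated subgroup $A'$ --- it is not an alternating word of positive length in your sense, but it maps into $A \le G$ and survives because $G$ embeds in $\Gamma$.
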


\vskip-3mm
\begin{figure}[h]
	\includegraphics[width=390px]{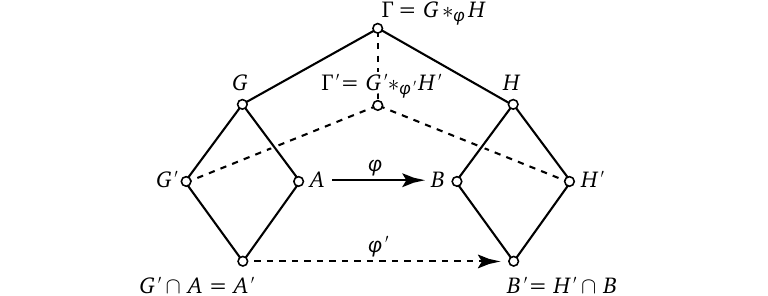}
	\caption{Construction of the group $\Gamma' = G'*_{\varphi'} H'$ in Lemma~\ref{LE NEW subgroups in amalgamated product}.}
	\label{FI Figure_02_Gamma_prime_1}
\end{figure}

\begin{proof}
By definition $\Gamma=(G * H)/N$ where 
$N$ is the normal closure of $\{ \varphi(a)\, a^{-1}\! \mathrel{|} a\in A\}$.
Any element in $\Gamma'$ can be presented as 
$c= c_0 \, c_1 \cdots c_{m-1}  c_m$
with each term 
$c_i$ picked from the factors $G'$ or $H'$ (the case $m=0$ is not ruled out). By where necessary merging the neighbour factors, we may suppose any two consecutive  terms $c_i, c_{i+1}$ are picked from different factors.
%
\begin{comment}
Since by definition $\Gamma=(G * H)/N$ where 
$N$ is the normal closure of $\{ \varphi(a)\, a^{\!-1} \mathrel{|} a\in A\}$, then $c$  clearly is in  
$\big((G' * H')\,N\big)/N$.
\end{comment}
%
Fix a transversal $T_{A'}$ to $A'$ in $G'$, and a transversal $T_{B'}$ to $B'$ in $H'$, and apply to $c$ the ``analog'' of the procedure of bringing to a normal form. Namely,
for $c_m \!\!\in G'$, write $c_m = u \, l_n$ where $u \in A'$ and  $l_n  \in T_{A'}$ (ignore the value $n$ for now). 
Since $c_{m-1}\!\in H'$, then $c_{m-1}\,u=c_{m-1}  v \in H'$ for $v=\varphi(u)\in B'$, and so  $c_{m-1}  v = w \, l_{n-1}$  where $w \in B'$ and  $l_{n-1}  \in T_{B'}$. 
We already have the last two terms for $c= c_0  \,c_1  \cdots c_{m-2}\cdot w\,  l_{n-1} l_n$.
% modulo $N$.
%
Continuing the process we %modulo $N$ 
get:
\begin{equation}
\label{EQ normal form of g}
c=l_0 \, l_1 \cdots l_{n-1} l_n
\end{equation}
where $n \le m$, \,
$l_0\in A'$ or $l_0\in B'$, and each of terms
$l_1,\ldots, l_n$ is a non-trivial element from $T_{A'}$ or  $T_{B'}$ such that no two consecutive  terms are from the same transversal.

To prove point \eqref{PO 1 LE NEW subgroups in amalgamated product} it is enough to show that \eqref{EQ normal form of g} is \textit{unique} for any $c\in \Gamma'$, because unique normal forms are one of the ways to define free products with amalgamation.
Notice that if $l_i, l_j$ from $T_{A'}$ are distinct modulo $A'$, they also are distinct modulo $A$ because $l_i,l_j$ are in $G'$, and so  $l_i^{\vphantom8}  l_j^{-1}\! \in A$ would imply 
$l_i^{\vphantom8}  l_j^{-1}\!\in (G' \cap A) = A'$. 
So we can choose a transversal $T_{A}$ to $A$ in $G$, containing $T_{A'}$ as its subset.
Similarly, if $l_i, l_j$ from $T_{B'}$ are distinct modulo $B'$, they also are distinct modulo $B$, and so we can choose a transversal $T_{B}$ to $B$ in $H$, containing $T_{B'}$. 
Finally, we may consider $l_0$ as an element from $A$ or $B$.
Thus, \eqref{EQ normal form of g} is nothing but the normal form of $c$ in $\Gamma = G *_\varphi H$ written using $T_A$ and $T_B$.
% modulo $N$ (with respect to $T_{A}$ and $T_{B}$). 
Since it is unique,  point \eqref{PO 1 LE NEW subgroups in amalgamated product} is proved.

Points \eqref{PO 2 LE NEW subgroups in amalgamated product}, \eqref{PO 3 LE NEW subgroups in amalgamated product} now follow from point \eqref{PO 1 LE NEW subgroups in amalgamated product}, and from uniqueness of the normal form.
\end{proof}

%%%%%%%%%%%%%%%%%%
%%%%%%%%%%%%%%%%%%
\begin{Corollary} 
\label{CO NEW G*H lemma corollary}
Let $\Gamma = G*_{A} H$,
and let $G'\le G$, $H'\le H$ be subgroups for which $G' \cap\, A = H' \cap\, A$. Then for $\Gamma'=\langle G', H'\rangle$ and $A' = G' \cap\, A $ we have:

\begin{enumerate}

\item 
\label{PO 1 CO NEW G*H lemma corollary}
$\Gamma'= G' \!*_{A'} H'$, in particular, if  $A\le G'\!,\,H'$, then $\Gamma'= G' *_{A} H'$;

\item 
\label{PO 2 CO NEW G*H lemma corollary}
$\Gamma' \cap\, A = A'$, in particular, if  $A\le G'\!,\,H'$, then $\Gamma' \cap\, A = A$\,;

\item 
\label{PO 3 CO NEW G*H lemma corollary}
$\Gamma' \cap\, G = G'$ and 
$\Gamma' \cap\, H = H'$.

\end{enumerate}
\end{Corollary}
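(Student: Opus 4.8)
The plan is to derive this corollary directly from Lemma~\ref{LE NEW subgroups in amalgamated product} by specializing to the case in which the two amalgamated subgroups coincide and the amalgamating isomorphism is the identity. Concretely, I would set $B = A$ and take $\varphi$ to be the identity isomorphism $\mathrm{id}_A : A \to A$, so that $\Gamma = G *_A H$ is precisely the free product $G *_\varphi H$ in the notation of the lemma. Everything then becomes a matter of bookkeeping.

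The first step is to verify that the compact hypothesis of the corollary yields the compatibility condition required by the lemma. Writing $A' = G' \cap A$ and $B' = H' \cap B = H' \cap A$, the lemma asks that $\varphi(A') = B'$. Since $\varphi$ is the identity we have $\varphi(A') = A'$, while the assumption $G' \cap A = H' \cap A$ says exactly that $A' = B'$; hence $\varphi(A') = B'$ holds and the lemma applies.

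Now I would translate each conclusion of the lemma into the present setting. The restriction $\varphi'$ of $\varphi = \mathrm{id}_A$ to $A'$ is again the identity, so point~\eqref{PO 1 LE NEW subgroups in amalgamated product} of the lemma reads $\Gamma' = G' *_{A'} H'$, which is the first assertion of point~\eqref{PO 1 CO NEW G*H lemma corollary}. Point~\eqref{PO 2 LE NEW subgroups in amalgamated product} gives $\Gamma' \cap A = A'$ together with $\Gamma' \cap B = B'$, but since $B = A$ and $A' = B'$ these are the same statement, yielding point~\eqref{PO 2 CO NEW G*H lemma corollary}; and point~\eqref{PO 3 LE NEW subgroups in amalgamated product} is verbatim point~\eqref{PO 3 CO NEW G*H lemma corollary}.

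Finally, for the ``in particular'' clauses I would note that if $A \le G'$ and $A \le H'$, then $A' = G' \cap A = A$, so $\Gamma' = G' *_A H'$ and $\Gamma' \cap A = A$ follow at once from the general statements just obtained. There is essentially no obstacle here, since all the real content resides in Lemma~\ref{LE NEW subgroups in amalgamated product}; the only point meriting a moment's care is confirming that the corollary's succinct hypothesis $G' \cap A = H' \cap A$ is exactly the lemma's condition $\varphi(A') = B'$ once $\varphi$ is specialized to the identity.
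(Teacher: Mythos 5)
Your proposal is correct and follows exactly the route the paper intends: the paper states this corollary without proof, treating it as the immediate specialization of Lemma~\ref{LE NEW subgroups in amalgamated product} to the case $B=A$, $\varphi=\mathrm{id}_A$, which is precisely your argument. Your verification that the hypothesis $G'\cap A = H'\cap A$ translates into the lemma's condition $\varphi(A')=B'$ is the one point of substance, and you handle it correctly.
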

%%%%%%%%%%%%%%%%%%

\begin{figure}[h]
	\label{FI Figure_03_Gamma_prime_2}
	\includegraphics[width=390px]{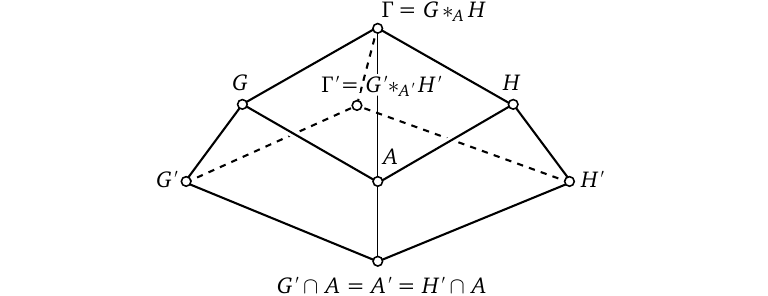}
	\caption{Construction of the group $\Gamma' = G'*_{A'} H'$ in Corollary~\ref{CO NEW G*H lemma corollary}.}
\end{figure}

In case the subgroup $A'$ above is \textit{trivial}, then we have much simpler situation:

%%%%%%%%%%%%%%%%%%
\begin{Corollary} 
\label{CO NEW G*H lemma corollary case A'=1}
If in the notation of Corollary~\ref{CO NEW G*H lemma corollary} the subgroup $A'$ is trivial, then:

\begin{enumerate}

\item 
\label{PO 1 CO NEW G*H lemma corollary case A'=1}
$\Gamma'= G' * H'$ is the ordinary free product of $G'$ and $H'$;

\item 
\label{PO 2 CO NEW G*H lemma corollary case A'=1}
If also 
$G'\cong F_m$ and $H'\cong F_n$ are isomorphic to free groups of rank $m$ and $n$ respectively, then $\Gamma' \cong F_{m+n}$.

\end{enumerate}
\end{Corollary}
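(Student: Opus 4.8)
The plan is to obtain both points as immediate consequences of Corollary~\ref{CO NEW G*H lemma corollary}, which already does all the structural work. By part~\eqref{PO 1 CO NEW G*H lemma corollary} of that corollary, for $\Gamma'=\langle G',H'\rangle$ and $A'=G'\cap A$ we have $\Gamma'=G' *_{A'} H'$. Under the present hypothesis $A'=\1$, the amalgamation identifies only the trivial element shared by the two factors, so the generalized free product with amalgamated subgroup degenerates into the ordinary free product. This gives point~\eqref{PO 1 CO NEW G*H lemma corollary case A'=1}, namely $\Gamma'=G'*H'$, with essentially nothing to check beyond recalling that $G*_{\1}H$ \emph{is} the definition of $G*H$.

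For point~\eqref{PO 2 CO NEW G*H lemma corollary case A'=1} I would invoke the standard structure theory of free groups together with associativity of the free product. A free group of rank $m$ is, up to isomorphism, the free product of $m$ copies of $\Z$, so $G'\cong F_m$ and $H'\cong F_n$ yield
\[
\Gamma'\;=\;G'*H'\;\cong\;F_m*F_n\;\cong\;\underbrace{\Z*\cdots*\Z}_{m+n}\;\cong\;F_{m+n},
\]
using point~\eqref{PO 1 CO NEW G*H lemma corollary case A'=1} for the first isomorphism. Thus the rank of $\Gamma'$ is the sum of the ranks of the two free factors.

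I expect no real obstacle here: the statement is a routine specialization of the preceding corollary, and the only nontrivial external input is the well-known fact that a free product of free groups is again free, of rank equal to the sum of the ranks. If one preferred a self-contained argument avoiding even that citation, the normal form established in the proof of Lemma~\ref{LE NEW subgroups in amalgamated product} could be used directly: when $A'=\1$ the transversals $T_{A'}$ and $T_{B'}$ may be taken to be $G'$ and $H'$ themselves, and the uniqueness of the alternating normal form shows that the union of a free basis of $G'$ and a free basis of $H'$ is a free basis of $\Gamma'$, giving rank $m+n$ outright.
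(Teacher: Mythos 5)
Your proposal is correct and matches the paper's intent exactly: the paper states this corollary without proof as an immediate specialization of Corollary~\ref{CO NEW G*H lemma corollary}, which is precisely your argument — setting $A'=\1$ turns $G'*_{A'}H'$ into the ordinary free product, and the standard fact that a free product of free groups is free of the summed rank gives point~\eqref{PO 2 CO NEW G*H lemma corollary case A'=1}. Your optional self-contained variant via the normal form of Lemma~\ref{LE NEW subgroups in amalgamated product} is also sound, just more than the paper deemed necessary.
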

%%%%%%%%%%%%%%%%%%

%%%%%%%%%%%%%%%%%%%%%%%%%%%%%%
\subsection{Subgroups in HNN-extensions}
\label{SU Subgroups in HNN-extensions}

The facts of previous subsection have direct  analogs for the HNN-extensions:

\begin{Lemma}
\label{LE NEW subgroups in HNN-extension}
Let $\Gamma=G *_{\varphi} t$ be the HNN-extension of the base group $G$ by the stable letter $t$
with respect to the isomorphism 
$\varphi: A \to B$
of the subgroups 
$A, B \le G$.
If $G'$ is a subgroup of $G$  such that
for $A'=G'\cap\, A$ and $B'=G'\cap\, B$ 
we have  
$\varphi (A') = B'$, then for the subgroup $\Gamma'=\langle G',t\rangle$  of $\Gamma$ and for the restriction $\varphi'$ of $\varphi$ on $A'$ we have:
% \begin{enumerate}[{\rm (i)}]
\begin{enumerate}
\item 
\label{PO1 LE NEW subgroups in HNN-extension}
$\Gamma' =G' *_{\varphi'} t$,

\item 
\label{PO2 LE NEW subgroups in HNN-extension}
$\Gamma' \cap G = G'$,

\item 
\label{PO3 LE NEW subgroups in HNN-extension}
$\Gamma' \cap \,A = A'$ \;and\; $\Gamma' \cap B = B'$.
\end{enumerate}
\end{Lemma}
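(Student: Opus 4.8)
The plan is to mirror the proof of Lemma~\ref{LE NEW subgroups in amalgamated product} step by step, replacing the normal form theorem for amalgamated products by Britton's lemma and the normal form theorem for HNN-extensions. First I would write an arbitrary element of $\Gamma'=\langle G',t\rangle$ as a word
\[
c = c_0\, t^{\epsilon_1} c_1\, t^{\epsilon_2}\cdots t^{\epsilon_n} c_n,
\]
with each $c_i \in G'$ and $\epsilon_i = \pm 1$ (the case $n=0$ not excluded), which is possible because $\Gamma'$ is generated by $G'$ together with $t$.

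Next I would fix a transversal $T_{A'}$ to $A'$ in $G'$ and a transversal $T_{B'}$ to $B'$ in $G'$, and run the usual reduction procedure: repeatedly pinch any subword $t^{-1} c_i\, t$ with $c_i \in A'$, replacing it by $\varphi(c_i)=\varphi'(c_i)\in B'$, and any subword $t\, c_i\, t^{-1}$ with $c_i \in B'$, replacing it by $\varphi^{-1}(c_i)=(\varphi')^{-1}(c_i)\in A'$, then absorbing the resulting $G'$-elements into their neighbours; finally I would make each surviving $G'$-syllable a coset representative from $T_{A'}$ or $T_{B'}$ according to the adjacent stable letters, as prescribed by the normal form theorem for HNN-extensions. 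This yields a reduced expression of $c$ whose syllables all lie in $G'$, in $T_{A'}$, or in $T_{B'}$.

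The heart of the argument, exactly as in Lemma~\ref{LE NEW subgroups in amalgamated product}, is the compatibility observation. If $l_i,l_j\in T_{A'}$ are distinct modulo $A'$, then they are distinct modulo $A$, because $l_i,l_j\in G'$ and $l_i^{\vphantom8} l_j^{-1}\in A$ would force $l_i^{\vphantom8} l_j^{-1}\in G'\cap A = A'$; hence $T_{A'}$ extends to a transversal $T_A$ to $A$ in $G$, and likewise $T_{B'}$ extends to a transversal $T_B$ to $B$ in $G$. The hypothesis $\varphi(A')=B'$ is precisely what guarantees that every pinch carried out inside $\Gamma'$ stays inside $G'$ (the image $\varphi(c_i)$ of a pinched $c_i\in A'$ lands in $B'\subseteq G'$, and symmetrically), so the reduced word obtained above is nothing but the $\Gamma$-normal form of $c$ computed with respect to $T_A$ and $T_B$.

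Uniqueness of the normal form in $\Gamma=G *_\varphi t$ then forces this expression to be unique, which gives point~\eqref{PO1 LE NEW subgroups in HNN-extension}, namely $\Gamma'=G' *_{\varphi'} t$. Points~\eqref{PO2 LE NEW subgroups in HNN-extension} and~\eqref{PO3 LE NEW subgroups in HNN-extension} then follow from the same uniqueness: an element of $\Gamma'$ lying in $G$ must have a normal form free of occurrences of $t$, hence lies in $G'$; and an element lying in $A$ (resp.\ $B$) must lie in $G'\cap A = A'$ (resp.\ $G'\cap B = B'$). The main obstacle I anticipate is bookkeeping the pinch conditions of Britton's lemma carefully enough to be certain that the reduction performed within $\Gamma'$ never leaves $G'$ and coincides verbatim with the reduction in $\Gamma$; this is exactly the place where the hypothesis $\varphi(A')=B'$ is indispensable.
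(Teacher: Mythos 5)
Your proposal is correct and follows essentially the same route as the paper's own proof: reduce an arbitrary word of $\langle G',t\rangle$ using transversals $T_{A'}$, $T_{B'}$ inside $G'$ (where $\varphi(A')=B'$ keeps every pinch inside $G'$), observe that $G'\cap A=A'$ lets $T_{A'}$, $T_{B'}$ extend to transversals $T_A$, $T_B$ in $G$, and invoke uniqueness of the normal form in $\Gamma=G*_\varphi t$ to get point~\eqref{PO1 LE NEW subgroups in HNN-extension}, with points~\eqref{PO2 LE NEW subgroups in HNN-extension} and~\eqref{PO3 LE NEW subgroups in HNN-extension} as consequences. The only cosmetic difference is that the paper starts from words with arbitrary powers $t^{s_i}$ and peels them off one letter at a time, whereas you normalize to exponents $\pm 1$ from the outset; the substance of the argument is identical.
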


%\vskip2mm
\begin{figure}[h]
	\label{FI Figure_04_Gamma_HNN_prime_1}
	\includegraphics[width=390px]{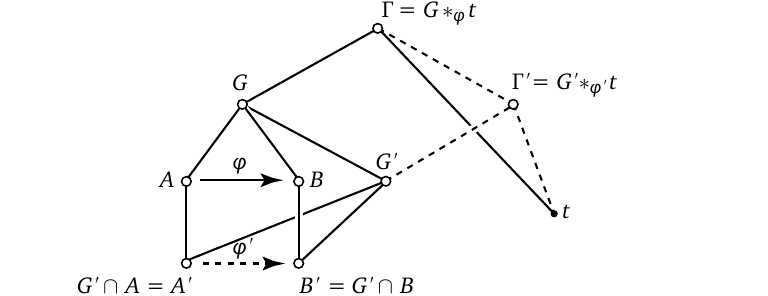}
	\caption{Construction of the group $\Gamma' = G'*_{\varphi'} t$ in Lemma~\ref{LE NEW subgroups in HNN-extension}.}
\end{figure}

\begin{proof}
By definition $\Gamma=(G * \langle t \rangle)/N$ where 
$N$ is the normal closure of $\{ \varphi(a)\, a^{-t}\! \mathrel{|} a\in A\}$.
Any element in $\Gamma'$ can be presented as 
$c= c_0 \, t^{s_{1}}  c_1 \, t^{s_{2}} \cdots \, t^{s_{m-1}} c_{m-1} \, t^{s_{m}} c_m \, t^{s_{m+1}}$
with each  
$c_i$ picked from $G'$
and $t^{s_{i}}$ picked from $\langle t \rangle$ 
(the case $m=0$ is not ruled out). By where necessary merging the neighbour factors, we may suppose 
no $c_i$ or $t^{s_{i}}$ are trivial except the last one, possibly.
%
\begin{comment}
Since by definition $\Gamma=(G * H)/N$ where 
$N$ is the normal closure of $\{ \varphi(a)\, a^{\!-1} \mathrel{|} a\in A\}$, then $c$  clearly is in  
$\big((G' * H')\,N\big)/N$.
\end{comment}
%
Fix a transversal $T_{A'}$ to $A'$ in $G'$, and a transversal $T_{B'}$ to $B'$ in $G'$, and apply to $c$ the ``analog'' of the procedure of bringing to a normal form. Namely,
if, say, $s_{m}<0$, then write $t^{s_{m}} c_m 
= t^{s_{m}+1} t^{-1} u \, l_n
= t^{s_{m}+1}  u^t t^{-1} l_n$ where $u \in A'$ and  $l_n  \in T_{A'}$. 
Since $u\in A'$, then $u^t = \varphi(u)$ is some element $v \in B'$, and so 
$t^{s_{m}+1}  u^t t^{-1} l_n
=t^{s_{m}+1}  v \, t^{-1} l_n$.

If still $s_{m}\!+1<0$, we can repeat the step to continue to: 
$$t^{s_{m}+1}  v \, t^{-1} l_n
= t^{s_{m}+2} t^{-1} w\, l_{n-1} \, t^{-1} l_n
= t^{s_{m}+2} \, z t^{-1} l_{n-1} \, t^{-1} l_n
$$
with 
$w\in A'$ and $z=w^t \in B'$ (the cases $v=1$, $w=1$ or $z=1$ are not ruled out). After a few such steps our work on $t^{s_{m}} c_m$ will finished. In case $s_{m}>0$, we would use $B'$ and $T_{B'}$, and write $t^{s_{m}} c_m 
= t^{s_{m}-1} t\, u \, l_n
= t^{s_{m}-1}  u^{t^{-1}} t\, l_n$, etc., instead. 

Assume a leftover $h$ remains after we finish the job with $t^{s_{m}} c_{m}$. 
Concatenate $h$ to the the next syllable $t^{s_{m-1}} c_{m-1}$, and repeat all the above steps for 
$t^{s_{m-1}} (c_{m-1} h)$  (taking into account if $s_{m-1}$ is negative or positive). 

Finally, if during our process subwords of type $t^{-1} 1 \, t$ or $t\, 1 \, t^{-1}$ occur,  just cancel them out.
At the end of this process we 
get:
\begin{equation}
\label{EQ normal form in G*t}
c=l_0 \,t^{\varepsilon_1} l_1 \,t^{\varepsilon_2}
\cdots 
\,t^{\varepsilon_{n-1}}
l_{n-1} 
\,t^{\varepsilon_{n}}
l_n
\end{equation}
where $n \le m$, 
$\varepsilon_i=\pm 1$, 
$l_0\in G'$, and for $i=1,\ldots,n$
if $\varepsilon_i=- 1$, then $l_i\in T_{A'}$; while if $\varepsilon_i=1$, then $l_i\in T_{B'}$. The case $l_i=1$ is \textit{not} ruled out, so subsequences of type $t^{-1} 1\,  t^{-1} 1 = t^{-2}$ or $t 1\,  t 1 = t^2$ are possible, but subsequences $t^{-1} 1 \, t$ or $t\, 1 \, t^{-1}$ still are impossible in \eqref{EQ normal form in G*t}.

The product 
\eqref{EQ normal form in G*t} meets all the formal requirements on the normal form in HNN-exten\-sions, so to prove point \eqref{PO1 LE NEW subgroups in HNN-extension} it is enough to show that \eqref{EQ normal form in G*t} is \textit{unique} for any $c\in \Gamma'$ (unique normal forms are one of the ways to define HNN-extensions).
If $l_i, l_j$ from $T_{A'}$ are distinct modulo $A'$, they also are distinct modulo $A$ (see the proof of Lemma~\ref{LE NEW subgroups in amalgamated product}).
We can choose a transversal $T_{A}$ to $A$ in $G$, containing $T_{A'}$.
Similarly, we can choose a transversal $T_{B}$ to $B$ in $H$, containing $T_{B'}$. 

Thus, \eqref{EQ normal form in G*t} is unique as it is the normal form of $c$ in $\Gamma = G *_\varphi t$ written inside $T_A$ and $T_B$.
Points \eqref{PO2 LE NEW subgroups in HNN-extension}, \eqref{PO3 LE NEW subgroups in HNN-extension} now follow from point \eqref{PO1 LE NEW subgroups in HNN-extension}.
\end{proof}

%%%%%%%%%%%%%%%%%%
%%%%%%%%%%%%%%%%%%
\begin{Corollary} 
\label{CO NEW G*t lemma corollary}
Let $\Gamma = G*_{A} t$,
and let $G'\le G$ be a subgroup. Then for $\Gamma'=\langle G', t\rangle$ and $A' = G' \cap\, A $ we have:

\begin{enumerate}

\item 
\label{PO 1 CO NEW G*t lemma corollary}
$\Gamma'= G' \!*_{A'} t$, in particular, if  $A \le G'$, then $\Gamma'= G' *_{A} t$;

\item 
\label{PO 2 CO NEW G*t lemma corollary}
$\Gamma' \cap\, A = A'$, in particular, if  $A\le G'$, then $\Gamma' \cap\, A = A$\,;

\item 
\label{PO 3 CO NEW G*t lemma corollary}
$\Gamma' \cap\, G = G'$.

\end{enumerate}
\end{Corollary}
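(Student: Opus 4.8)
The plan is to obtain everything as a direct specialization of Lemma~\ref{LE NEW subgroups in HNN-extension}, taking $B=A$ and letting $\varphi$ be the identity isomorphism on $A$. First I would check that the hypothesis of the lemma is automatically fulfilled in this situation: since $B=A$, the subgroup $B'=G'\cap A$ coincides with $A'=G'\cap A$, and the required condition $\varphi(A')=B'$ collapses to $A'=A'$, which holds trivially. Hence no extra assumption on $G'$ is needed, and all three conclusions of the lemma become available for the pair $G',\,t$.

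Next I would translate each conclusion into the present notation. The restriction $\varphi'$ of the identity $\varphi$ to $A'$ is again the identity on $A'$, so point~\eqref{PO1 LE NEW subgroups in HNN-extension} of the lemma reads $\Gamma'=G'*_{A'}t$, which is exactly our point~\eqref{PO 1 CO NEW G*t lemma corollary}. Likewise point~\eqref{PO3 LE NEW subgroups in HNN-extension} gives $\Gamma'\cap A=A'$ (here $\Gamma'\cap A$ and $\Gamma'\cap B$ are literally the same intersection because $A=B$), yielding point~\eqref{PO 2 CO NEW G*t lemma corollary}, and point~\eqref{PO2 LE NEW subgroups in HNN-extension} gives $\Gamma'\cap G=G'$, which is point~\eqref{PO 3 CO NEW G*t lemma corollary}.

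Finally I would dispatch the two \emph{``in particular''} clauses: if $A\le G'$ then $A'=G'\cap A=A$, whence $\Gamma'=G'*_A t$ and $\Gamma'\cap A=A$ follow by direct substitution. I do not expect any genuine obstacle here—the entire content is the observation that the equal-subgroup, identity-isomorphism case meets the lemma's hypothesis for free. The only place deserving a word of care is noting that the restriction of the identity map is again the identity, so that the amalgamation in point~\eqref{PO 1 CO NEW G*t lemma corollary} is over $A'$ with the identical identification rather than over some nontrivial isomorphism.
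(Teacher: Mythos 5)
Your proposal is correct and is exactly the derivation the paper intends: the corollary is stated without a separate proof precisely because it is the specialization of Lemma~\ref{LE NEW subgroups in HNN-extension} to $B=A$ and $\varphi=\mathrm{id}_A$, under which the hypothesis $\varphi(A')=B'$ holds automatically, just as you observe. Your handling of the \emph{``in particular''} clauses via $A\le G'\Rightarrow A'=A$ and the remark that the restriction of the identity is again the identity (so the amalgamation notation $G'*_{A'}t$ applies) are the only points needing mention, and you covered both.
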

%%%%%%%%%%%%%%%%%%

\begin{figure}[h]
	\label{FI Figure_05_Gamma_HNN_prime_2}
	\includegraphics[width=390px]{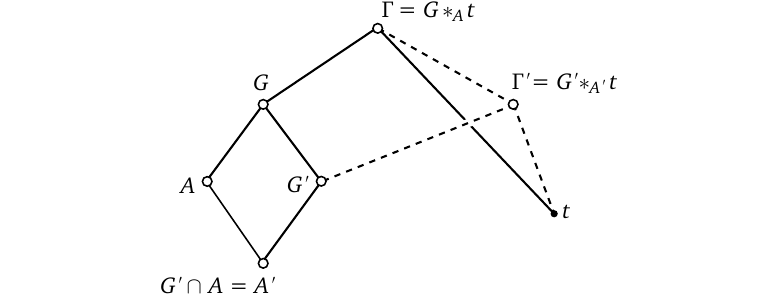}
	\caption{Construction of the group $\Gamma' = G'*_{A'} t$ in Corollary~\ref{CO NEW G*t lemma corollary}.}
\end{figure}

%%%%%%%%%%%%%%%%%%
\begin{Remark} 
\label{RE about multiple stable letters}
It is easy to adapt the proof of Lemma~\ref{LE NEW subgroups in HNN-extension} for the case of multiple stable letters $t_1, \ldots, t_k$. And that adaptation will be especially simple, if all the stable letters $t_1, \ldots, t_k$ just fix the \textit{same} subgroup $A$ in $G$. In such a case, say, point~\eqref{PO 1 CO NEW G*t lemma corollary} in Corollary~\ref{CO NEW G*t lemma corollary} will read: 
$\Gamma'= G' \!*_{A'} (t_1, \ldots, t_k)$ for $\Gamma'=\langle G', t_1, \ldots, t_k\rangle$ and for $A' = G' \cap\, A$. 
\end{Remark}

In case the above $A'$  is \textit{trivial}, we have much simpler situation:

%%%%%%%%%%%%%%%%%%
\begin{Corollary} 
\label{CO NEW G*t lemma corollary case A'=1}
If in the notation of Corollary~\ref{CO NEW G*t lemma corollary} the subgroup $A'$ is trivial, then:

\begin{enumerate}

\item 
\label{PO 1 CO NEW G*t lemma corollary case A'=1}
$\Gamma'= G' *\langle t \rangle$ is the ordinary free product of $G'$ and of an infinite cycle $\langle t \rangle$;

\item 
\label{PO 2 CO NEW G*t lemma corollary case A'=1}
If also 
$G'\cong F_m$ is isomorphic to free groups rank $m$, then $\Gamma' \cong F_{m+1}$.

\end{enumerate}
\end{Corollary}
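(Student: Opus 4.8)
The plan is to derive both claims directly from Corollary~\ref{CO NEW G*t lemma corollary}, since the hypothesis here is simply the special case in which the intersection subgroup $A'=G'\cap A$ is trivial. For point~\eqref{PO 1 CO NEW G*t lemma corollary case A'=1}, I would start from part~\eqref{PO 1 CO NEW G*t lemma corollary} of Corollary~\ref{CO NEW G*t lemma corollary}, which already gives $\Gamma'=G'*_{A'}t$. Setting $A'=\1$ means the stable letter $t$ now fixes only the trivial subgroup, so the defining HNN-relations $a^t=a^{\varphi'}$ with $a\in A'$ become vacuous. An HNN-extension with trivial associated subgroups has no nontrivial conjugation relations imposed on $t$, and hence coincides with the ordinary free product $G'*\langle t\rangle$, where $\langle t\rangle$ is infinite cyclic because $t$ has infinite order in $\Gamma$. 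Thus $\Gamma'=G'*\langle t\rangle$, as claimed.

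For point~\eqref{PO 2 CO NEW G*t lemma corollary case A'=1}, I would then specialize to $G'\cong F_m$. Substituting into the free product just obtained gives $\Gamma'\cong F_m*\langle t\rangle\cong F_m*F_1$. The remaining step is the standard fact that the free product of free groups is free, with rank equal to the sum of the ranks: $F_m*F_n\cong F_{m+n}$. Applying this with $n=1$ yields $\Gamma'\cong F_{m+1}$. This can be seen immediately by taking a free basis $x_1,\ldots,x_m$ of $G'$ and adjoining $t$: the set $\{x_1,\ldots,x_m,t\}$ freely generates $\Gamma'$, since any reduced word in these $m+1$ symbols is also reduced as an alternating product in the free product $G'*\langle t\rangle$ and therefore cannot equal the identity unless empty.

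I do not anticipate a genuine obstacle here, as both statements are essentially the degenerate specializations of results already established. The only point requiring a moment of care is the justification that $\langle t\rangle$ is infinite cyclic rather than finite: this follows because $t$ has infinite order in the ambient HNN-extension $\Gamma=G*_\varphi t$ (the stable letter always does), and part~\eqref{PO 3 CO NEW G*t lemma corollary} of Corollary~\ref{CO NEW G*t lemma corollary} together with the normal form guarantees that no nontrivial power of $t$ can collapse inside $\Gamma'$. With that observation in place, both parts follow in a single line each, and the corollary serves merely to record the convenient free-group conclusion for later use in constructing benign subgroups of small rank.
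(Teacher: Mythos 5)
Your proposal is correct and follows exactly the route the paper intends: the paper states this corollary without proof as the evident specialization of Corollary~\ref{CO NEW G*t lemma corollary}\;\eqref{PO 1 CO NEW G*t lemma corollary} to trivial $A'$, where the HNN-relations become vacuous and $\Gamma'=G'*\langle t\rangle$, with the free-group conclusion $F_m * F_1 \cong F_{m+1}$ being the standard fact about free products of free groups. Your added care about $t$ having infinite order (via the normal form) is a sound, if routine, justification of the phrase ``infinite cycle'' in the statement.
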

%%%%%%%%%%%%%%%%%%

%%%%%%%%%%%%%%%%%%%%%%%%%%%%%%
%%%%%%%%%%%%%%%%%%%%%%%%%%%%%%
%%%%%%%%%%%%%%%%%%%%%%%%%%%%%%
\section{The $\bigast$-construction and its subgroups}
\label{SU The star construction}

\subsection{Building the $\bigast$-construction}
\label{SU Building the *-construction}

Let $G\le M  \le K_1,\ldots,K_r$ be an arbitrary system of groups  such that $K_i \cap  K_j=M$ for any distinct indices $i,j=1,\ldots,r$.
Picking in each $K_i$ a subgroup $L_i$ 
we can  build the ``nested'' free construction:
\begin{equation}
\label{EQ initial form of star construction}
\Big(\cdots
\Big( \big( (K_1 *_{L_1} t_1) *_M (K_2 *_{L_2} t_2) \big) *_M  (K_3 *_{L_3} t_3)\Big)\cdots 
\Big) *_M  (K_r *_{L_r} t_r)
\end{equation}
in which the HNN-extensions $K_i *_{L_i} t_i$ all are amalgamated in their common subgroup $M$.
To avoid the very bulky notation of \eqref{EQ initial form of star construction}, let us for the sake of briefness denote that construction via \eqref{EQ *-construction short form}, and  call it a\, $\bigast$-\textit{construction}.

\begin{figure}[h]
	\includegraphics[width=390px]{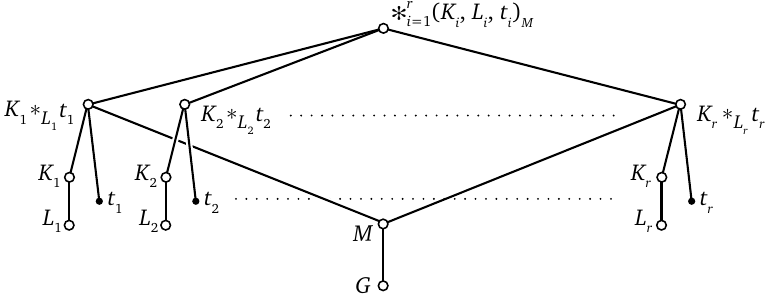}
	\caption{Construction of the group\, $\bigast_{i=1}^{r}(K_i, L_i, t_i)_M$ in \eqref{EQ initial form of star construction} and in \eqref{EQ *-construction short form}.} 
	\label{FI Figure_01_Star_construction}
\end{figure}

We are going to also use this group in some extreme cases, such as, $G=M$ or $K_i=M$ for all $i=1,\ldots,r$. 
Denote $G \cap \, L_i = A_i$, $i=1,\ldots,r$.
When for each $i$ we are limited to $K_i=G$, $L_i=A_i$, $M=G$, then 
$\bigast_{i=1}^{r}(G, A_i, t_i)_G$ is noting but the usual HNN-extension $G *_{A_1,\ldots,\,A_r } \!(t_1,\ldots,t_r)$.
And when, in addition to that, all the subgroups $A_i$ are trivial, then this $\bigast$-construction simply is the free product $G * \langle t_1,\ldots,t_r \rangle$ where $\langle t_1,\ldots,t_r \rangle\cong F_r$ is a free group of rank $r$.

\medskip
An evident feature of this construction is:

%%%%%%%%%%%%%%%%%%%%%%%%
%%%%%%%%%%%%%%%%%%%%%%%%
\begin{Lemma}
\label{LE finitey presented star construction}
In the above notation, if each of $K_i$ is finitely presented, while $M$ and each of $L_i$ are finitely generated, $i=1,\ldots,r$, then $\bigast_{i=1}^{r}(K_i, L_i, t_i)_M$ also is finitely presented. 
\end{Lemma}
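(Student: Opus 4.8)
The plan is to build up the finite presentation from the two standard facts about free constructions. Recall that an HNN-extension of a finitely presented group with finitely generated associated subgroups is again finitely presented, and that the amalgamated free product of two finitely presented groups over a finitely generated amalgamated subgroup is again finitely presented; in both cases one takes the union of the (finite) generating and relator sets of the factors, adjoins the stable letter(s) in the HNN case, and adds only the finitely many relations that describe the action on, or the identification of, a finite generating set of the associated or amalgamated subgroup. Applying the first fact, each HNN-extension $H_i := K_i *_{L_i} t_i$ is finitely presented, since $K_i$ is finitely presented and $L_i$ is finitely generated; here both associated subgroups equal $L_i$, so the adjoined relations are just $l^{t_i}=l$ for $l$ in a finite generating set of $L_i$.

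Next I would run an induction on $r$ following the nesting in \eqref{EQ initial form of star construction}. Writing $P_s = \bigast_{i=1}^{s}(K_i, L_i, t_i)_M$, the base case $P_1 = H_1$ is covered by the previous paragraph. For the inductive step one has $P_s = P_{s-1} *_M H_s$; since $P_{s-1}$ is finitely presented by the inductive hypothesis, $H_s$ is finitely presented, and $M$ is finitely generated, the second fact above shows that $P_s$ is finitely presented. Iterating up to $s=r$ yields the claim.

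The only point requiring attention, and the place where the hypotheses on the $\bigast$-construction are used, is to check that each amalgamation $P_{s-1} *_M H_s$ is genuine, i.e.\ that the finitely generated group $M$ embeds as a common subgroup of both factors. On the $H_s$ side this is clear from $M \le K_s \le H_s$, and on the $P_{s-1}$ side it follows from the standard fact that an amalgamated subgroup embeds undistorted in the amalgamated product, so that the copy of $M$ coming from $K_1$ persists through all earlier amalgamations; the assumption $K_i \cap K_j = M$ guarantees that the copies of $M$ in the several factors are consistently identified. Assembling everything, one obtains the single finite presentation whose generators are the finite generating sets of $K_1, \ldots, K_r$ together with $t_1, \ldots, t_r$, and whose relations are the defining relations of the $K_i$, the finitely many HNN-relations $l^{t_i}=l$ (with $l$ ranging over a finite generating set of each $L_i$), and the finitely many relations identifying the chosen generators of $M$ across the factors $K_1, \ldots, K_r$. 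As this presentation is manifestly finite, $\bigast_{i=1}^{r}(K_i, L_i, t_i)_M$ is finitely presented. The main obstacle is thus merely the bookkeeping of keeping $M$ finitely generated and embedded at every stage, which is precisely what the amalgamation hypotheses provide.
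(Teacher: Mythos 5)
Your proof is correct and is precisely the argument the paper has in mind: the paper states this lemma without proof, as an ``evident feature'' of the construction, and your induction along the nesting in \eqref{EQ initial form of star construction} --- using that an HNN-extension of a finitely presented group over a finitely generated associated subgroup is finitely presented, and that an amalgamated product of two finitely presented groups over a finitely generated amalgamated subgroup is finitely presented --- is exactly the bookkeeping that makes it evident. One small remark: the word ``undistorted'' is unnecessary (and amalgamated subgroups can in fact be distorted); all that is needed is that $M$ embeds injectively into $H_s$ and into each partial product $P_{s-1}$, which is the standard embedding property of factors in amalgamated free products and of the base group in HNN-extensions.
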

 
We plan to employ this feature of $\bigast$-construction to build finitely presented overgroups of $G$, and Corollary~\ref{CO intersection and join are benign multi-dimensional} below is going to be the first usage of it, see also applications of that corollary in \cite{A modified proof for Higman, On explicit embeddings of Q,
An explicit algorithm}.

%%%%%%%%%%%%%%%%%%%%%%%%
%%%%%%%%%%%%%%%%%%%%%%%%
\subsection{Subgroups in $\bigast_{i=1}^{r}(K_i, L_i, t_i)_M$}
\label{SU Subgroups in 
bigast} 
It turns out that some HNN-extensions may be discovered inside $\bigast$-constructions.

%%%%%%%%%%%%%%%%%%%%%%%%
%%%%%%%%%%%%%%%%%%%%%%%%
\begin{Lemma}
\label{LE intersection in bigger group multi-dimensional}
If $G\le M  \le K_1,\ldots,K_r$ and $A_i  = G \cap \, L_i$ are the groups mentioned above, then in ${\bigast}_{i=1}^{r}(K_i, L_i, t_i)_M$
we have:
$$
\langle G, t_1,\ldots,t_r \rangle= 
G *_{A_1,\ldots,\,A_r } \!(t_1,\ldots,t_r).
$$
\end{Lemma}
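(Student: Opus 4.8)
The plan is to argue by induction on $r$, peeling off the outermost amalgamated factor of the nested construction \eqref{EQ initial form of star construction}. Write $P_i = K_i *_{L_i} t_i$ for the $i$-th HNN-extension, and let $\Delta_k$ denote the partial construction $\bigl(\cdots(P_1 *_M P_2) *_M \cdots\bigr) *_M P_k$, so that $\Delta_1 = P_1$ and $\Delta_r$ is the full $\bigast$-construction; since the factors of an amalgamated product embed, each $\Delta_k$ embeds in $\Delta_{k+1}$, and hence in $\Delta_r$. Put $G^{(k)} = \langle G, t_1, \ldots, t_k \rangle$, computed inside $\Delta_k$. I will prove by induction the strengthened statement that $G^{(k)} = G *_{A_1, \ldots, A_k}(t_1, \ldots, t_k)$ and, in addition, $G^{(k)} \cap M = G$; the extra clause is what makes the inductive step go through, as it supplies the matching-intersection hypothesis required by the amalgamation corollary.

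For the base case $\Delta_1 = K_1 *_{L_1} t_1$, Corollary~\ref{CO NEW G*t lemma corollary} applied with base group $K_1$ and subgroup $G \le K_1$ gives $G^{(1)} = \langle G, t_1\rangle = G *_{A_1} t_1$ together with $G^{(1)} \cap K_1 = G$; since $G \le M \le K_1$, intersecting further with $M$ yields $G^{(1)} \cap M = G$. The same corollary, applied inside the rightmost factor $P_{k+1} = K_{k+1} *_{L_{k+1}} t_{k+1}$ with subgroup $G \le K_{k+1}$, shows that $H' := \langle G, t_{k+1}\rangle = G *_{A_{k+1}} t_{k+1}$ and likewise $H' \cap M = G$.

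For the inductive step I view $\Delta_{k+1} = \Delta_k *_M P_{k+1}$ as an amalgamated product over $M$, with first-factor subgroup $G' := G^{(k)} \le \Delta_k$ and second-factor subgroup $H' = \langle G, t_{k+1}\rangle \le P_{k+1}$. By the inductive hypothesis and the previous paragraph both intersect $M$ exactly in $G$, so $G' \cap M = H' \cap M = G$ and Corollary~\ref{CO NEW G*H lemma corollary} applies: it gives $\langle G', H'\rangle = G' *_G H'$, identifies this with $G^{(k+1)} = \langle G, t_1, \ldots, t_{k+1}\rangle$, and yields $G^{(k+1)} \cap M = G$, which closes the induction on the auxiliary clause. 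It then remains to recognize the amalgamated product $G^{(k+1)} = \bigl(G *_{A_1, \ldots, A_k}(t_1,\ldots,t_k)\bigr) *_G \bigl(G *_{A_{k+1}} t_{k+1}\bigr)$ as the single HNN-extension $G *_{A_1, \ldots, A_{k+1}}(t_1, \ldots, t_{k+1})$.

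This last recognition step is where I expect the only genuine work to lie. Both factors share $G$ as their base and are amalgamated precisely along $G$, so writing down presentations shows that the stable letters $t_1, \ldots, t_k$ (contributed by the first factor) and $t_{k+1}$ (contributed by the second) are subject to no relations among themselves, only to the defining relations $a^{t_i} = a$ for $a \in A_i$; comparing this with the presentation of $G *_{A_1, \ldots, A_{k+1}}(t_1, \ldots, t_{k+1})$ gives the identification, and the same merging of several stable letters fixing subgroups of a common base is already recorded in Remark~\ref{RE about multiple stable letters}. Taking $k+1 = r$ completes the proof. The main obstacle is thus essentially bookkeeping: keeping the intersection invariant $G^{(k)} \cap M = G$ alive so that the amalgamation corollary remains applicable, and cleanly justifying the merge of the two HNN-layers over the common base $G$.
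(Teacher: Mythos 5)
Your proposal is correct and follows essentially the same route as the paper's own proof: induction on $r$ peeling off the last factor, with Corollary~\ref{CO NEW G*t lemma corollary} applied inside each HNN-layer $K_i *_{L_i} t_i$, Corollary~\ref{CO NEW G*H lemma corollary} applied across the amalgamation over $M$, and the same presentation-comparison argument to merge the resulting amalgamated product into the single HNN-extension $G *_{A_1,\ldots,A_{r}}(t_1,\ldots,t_{r})$. If anything, your explicitly maintained invariant $G^{(k)} \cap M = G$ is a sharper bookkeeping of the hypothesis the corollary actually needs (intersection with the amalgamated subgroup $M$), which the paper states only loosely as the subgroups intersecting ``with $G$'' in $G$.
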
 

\begin{proof} 
Applying induction over $r$ we for $r=2$ have to display $\langle G, t_1,t_2 \rangle= 
G *_{A_1,\,A_2 } (t_1,t_2)$
in the $\bigast$-construction:
\begin{equation}
\label{EQ star for two groups}
(K_1 *_{L_1} t_1) *_M (K_2 *_{L_2} t_2).
\end{equation}
In $K_1 *_{L_1} t_1$ we by
Corollary~\ref{CO NEW G*t lemma corollary}\;\eqref{PO 1 CO NEW G*t lemma corollary} have
$\langle G, t_1 \rangle
= G *_{G \,\cap\, L_1} t_1
= G *_{A_1} t_1$. 
Similarly, $\langle G, t_2 \rangle= G *_{A_2} t_2$ in $K_2 *_{L_2} t_2$.
And  since in \eqref{EQ star for two groups} the intersection of both $\langle G, t_1 \rangle$ and $\langle G, t_2  \rangle$ with $G$ clearly is $G$, we apply  Corollary~\ref{CO NEW G*H lemma corollary}\;\eqref{PO 1 CO NEW G*H lemma corollary}
to get: 
$$
\langle G, t_1 , t_2 \rangle= 
\big\langle\langle G, t_1\rangle, \langle G, t_2\rangle\big\rangle= 
(G *_{A_1} t_1) *_G (G *_{A_2} t_2).
$$
But the above amalgamated free prodcut is noting but $G *_{A_1,\,A_2 } (t_1,t_2)$, which is trivial to see by listing all the defining relations of both constructions: relations of $G$ followed by relations stating that $t_1$ fixes the  $A_1$ and $t_2$ fixes  $A_2$ (plus the relations identifying both copies of $G$, if we initially assume them to be disjoint).

Next assume the statement is true for $r\!-\!1$, i.e.,
\begin{equation}
\label{EQ for inductive step in lemma}
\langle G, t_1,\ldots,t_{r-1} \rangle= 
G *_{A_1,\ldots,\,A_{r-1} } (t_1,\ldots,t_{r-1} ).
\end{equation}
Again by Corollary~\ref{CO NEW G*t lemma corollary}\;\eqref{PO 1 CO NEW G*t lemma corollary} write
$\langle G, t_r \rangle
= G *_{G \,\cap\, L_r}\!  t_r
= G *_{A_r}\! t_r$.
We have that \eqref{EQ for inductive step in lemma} and $\langle G, t_r \rangle$ both intersect with $G$ in $G$, and by Corollary~\ref{CO NEW G*H lemma corollary}\;\eqref{PO 1 CO NEW G*H lemma corollary}
we get:
\begin{equation*}
\label{EQ rewritten long form  multi-dimensional}
\langle G, t_1,\ldots,t_r \rangle=
\big(
G *_{A_1,\ldots,\,A_{r-1}}  (t_1,\ldots,t_{r-1} )\big) *_G  (G *_{A_r} t_r)
=
G *_{A_1,\ldots,\,A_{r} } (t_1,\ldots,t_{r} ).
\end{equation*}
\end{proof}

For any group $G$ and its subgroup $A$ 
the well known equality $G \cap G^t \!= A$ for the HNN-extension $G*_A t$  can be generalized to the following:

\begin{Lemma}
\label{LE intersection in HNN extension multi-dimensional}
Let $A_1,\ldots,\,A_r$ be any subgroups in a group $G$ with the intersection 
$I=\bigcap_{\,i=1}^{\,r} \,A_i$. 
Then in $G *_{A_1,\ldots,\,A_r} (t_1,\ldots,t_r)$ we have:
\begin{equation}
\label{EQ gemeral intersection in HNN}
\textstyle
G \cap G^{t_1 \cdots\, t_r}
= I,
\end{equation}
\end{Lemma}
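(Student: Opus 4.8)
The plan is to prove the two inclusions of \eqref{EQ gemeral intersection in HNN} separately. The inclusion $I \subseteq G \cap G^{t_1 \cdots t_r}$ is the elementary one: if $a \in I$, then $a \in A_i$ for every $i$, so each defining relation $a^{t_i} = a$ applies. Conjugating $a$ successively by $t_1, \ldots, t_r$ therefore leaves it unchanged, giving $a^{t_1 \cdots t_r} = a \in G$; this simultaneously places $a$ in $G$ and exhibits it as $(t_1\cdots t_r)^{-1} a\,(t_1\cdots t_r) \in G^{t_1\cdots t_r}$, so $a \in G \cap G^{t_1\cdots t_r}$.

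For the reverse inclusion I would argue by induction on $r$, exploiting the fact that the multiple HNN-extension is an iterated single-letter one. Writing $\Gamma_{r-1} = G *_{A_1,\ldots,A_{r-1}}(t_1,\ldots,t_{r-1})$, the whole group is $\Gamma_r = \Gamma_{r-1} *_{A_r} t_r$, since $A_r \le G \le \Gamma_{r-1}$ and $t_r$ fixes $A_r$. The base case $r=1$ is exactly the well-known equality $G \cap G^{t_1} = A_1$ recalled just before the statement. For the inductive step, set $w = t_1 \cdots t_{r-1}$ and take $x \in G \cap G^{w t_r}$. Then $x = (wt_r)^{-1} g\,(w t_r)$ for some $g \in G$, whence $t_r\, x\, t_r^{-1} = w^{-1} g\, w$ lies in $\Gamma_{r-1}$, i.e.\ it is $t_r$-free. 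Applying Britton's lemma to the single stable letter $t_r$ over the base $\Gamma_{r-1}$, the syllable $t_r\, x\, t_r^{-1}$ can be $t_r$-free only if it is a pinch, which forces $x \in A_r$ and then $t_r\, x\, t_r^{-1} = x$. Consequently $x = w^{-1} g\, w$, so $x \in G \cap w^{-1} G w = G \cap G^{t_1\cdots t_{r-1}}$. Since both $G$ and $w^{-1} G w$ already lie inside $\Gamma_{r-1}$, this intersection has the same value whether computed in $\Gamma_{r-1}$ or in $\Gamma_r$, and the induction hypothesis identifies it with $\bigcap_{i=1}^{r-1} A_i$. Combined with $x \in A_r$ this yields $x \in I$, completing the step.

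The main obstacle is the Britton's lemma step: one must be sure that $t_r\, x\, t_r^{-1}$ being $t_r$-free really forces $x$ into the associated subgroup $A_r$ (this is nothing but the $r=1$ case applied over the base $\Gamma_{r-1}$), and one must check that passing to the subgroup $\Gamma_{r-1}$ does not alter the value of the intersection $G \cap w^{-1} G w$ (it does not, because $\Gamma_{r-1}$ embeds in $\Gamma_r$ and both subgroups already sit inside it). A self-contained variant avoids the induction altogether: with $g' = w x w^{-1} \in G$ for $w = t_1\cdots t_r$, the word $g'^{-1}\, t_1 t_2 \cdots t_r\, x\, t_r^{-1} \cdots t_2^{-1} t_1^{-1}$ represents $1$, and its \emph{only} possible pinch is the innermost $t_r\, x\, t_r^{-1}$; peeling pinches off from the inside out by the generalized Britton's lemma forces $x \in A_r,\,A_{r-1},\,\ldots,\,A_1$ in turn and leaves $g' = x$, giving once more $x \in I$.
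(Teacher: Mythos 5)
Your proposal is correct, but your main argument is organized quite differently from the paper's. The paper gives a single, non-inductive computation: it fixes a transversal $T_{A_i}$ of each $A_i$ in $G$, writes $g$ through the nested decomposition $g=a_1l_1$, $a_1=a_2l_2,\dots$, and computes the normal form of $t_r^{-1}\cdots t_1^{-1}\,g\,t_1\cdots t_r$ directly, observing that $t_1$ survives in the syllable $t_1^{-1}l_1t_1$ unless $l_1=1$, then $t_2$ survives unless $l_2=1$, and so on, until $g=a_1=\cdots=a_r\in I$. You instead run an induction on $r$, resting on the decomposition $G*_{A_1,\ldots,A_r}(t_1,\ldots,t_r)=\Gamma_{r-1}*_{A_r}t_r$ with $\Gamma_{r-1}=G*_{A_1,\ldots,A_{r-1}}(t_1,\ldots,t_{r-1})$, and on Britton's lemma for the single letter $t_r$; the base case is exactly the classical equality $G\cap G^{t}=A$ that the paper quotes just before the statement. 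This buys modularity (the whole lemma collapses to the one-letter case, with no transversal bookkeeping), at the cost of the two side verifications you correctly flag: that $t_r x t_r^{-1}$ lying in $\Gamma_{r-1}$ forces $x\in A_r$ (true, since with the identity isomorphism both associated subgroups equal $A_r$, so the pinch condition gives $x\in A_r$ and $t_rxt_r^{-1}=x$), and that the intersection $G\cap G^{t_1\cdots t_{r-1}}$ is the same computed in $\Gamma_{r-1}$ or in $\Gamma_r$ (true, since the base group embeds in an HNN-extension and both subgroups lie in $\Gamma_{r-1}$). Your ``self-contained variant'' at the end is, in substance, the paper's own proof: peeling the unique available pinch from the inside out of $g'^{-1}t_1\cdots t_r\,x\,t_r^{-1}\cdots t_1^{-1}$ is precisely what the paper does by hand with coset representatives --- the author explicitly prefers that formulation because it needs nothing beyond elementary combinatorial manipulation of normal forms, whereas your inductive version packages the same mechanism as repeated applications of the known one-letter fact.
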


\begin{proof}
Choose a transversal $T_{A_i}$ to $A_i$ in $G$, $i=1,\ldots,r$.
Take any $g\in G$, and show that if $g^{t_1 \cdots\, t_r}\in G$, then $g$ is inside each of $A_i$.
Write $g=a_1 l_1$ where $a_1\in A_1$ and $l_1 \in T_{A_1}$. In turn, $a_1$ can be written as $a_1=a_2 l_2$ where $a_2\in A_2$ and $l_2 \in T_{A_1}$. This process can be continued for $A_3,\ldots, A_r$ (the case when some of $a_i$ or $l_i$, $i=1,\ldots,r$, are trivial is \textit{not} ruled out).
Since the inverse $t_i^{-1}$ of the stable letter $t_i$ also fixes $A_i$, calculation of the normal form for $g^{t_1 \cdots\, t_r}$ can be started via the following steps:
\begin{equation}
\label{EQ calculations in HNN}
\begin{aligned}
g^{t_1 \cdots\, t_r}
%%%%%%
&=t_r^{-1}\cdots t_1^{-1} a_1 l_1 \, t_1\cdots t_r \\
%%%%%%
& =t_r^{-1}\cdots t_2^{-1} a_1 t_1^{-1} l_1 \, t_1\cdots t_r 
\\
%%%%%%
& =  \cdots \cdots \cdots \cdots \cdots \cdots \cdots \cdots \cdots \cdots \cdots   \\
& =  a_{r}\, t_{r}^{-1}  l_{r}\, t_{r-1}^{-1} l_{r-1} \cdots \,l_3\,t_2^{-1} l_2\, t_1^{-1} l_1 \, t_1\cdots t_r 
\end{aligned}
\end{equation}
The above belongs to $G$ only if it contains no stable letters $t_i$.
But the last line of \eqref{EQ calculations in HNN} has a $t_1$  in the syllabus $ t_1^{-1} l_1 \, t_1$ \textit{only}. Hence, that line does not contain $t_1$ only when $l_1=1$, i.e., $t_1^{-1} l_1 \, t_1=1$, and
$t_2^{-1} l_2 t_1^{-1} l_1 \, t_1\, t_2=t_2^{-1} l_2 \, t_2$.
Then to exclude $t_2$ we must have $l_2=1$ and $t_2^{-1} l_2 \, t_2=1$. 
At the end we get the last line of \eqref{EQ calculations in HNN} reduced to 
$a_{r} t_{r}^{-1} l_{r} t_{r}=a_{r}$ where $l_{r}=1$, and therefore $a_{r} \in I$.

On the other hand, any  $g \in I$ is fixed by each of $t_i$, and so $g^{t_1 \cdots t_r}=g\in G$, and thus, 
 $I \subseteq G \cap G^{t_1 \cdots t_r}$.
\end{proof}

Another proof of this lemma could be deduced from Corollary~\ref{CO NEW G*H lemma corollary} and 
Corollary~\ref{CO NEW G*t lemma corollary} (in a manner rather similar to the proof of Lemma~\ref{LE join in HNN extension multi-dimensional} below), but we prefer this version as it follows from some  trivial \textit{combinatorial} manipulations already.

\begin{Lemma}
\label{LE join in HNN extension multi-dimensional}
Let $A_1,\ldots,\,A_r$ be any subgroups in a group $G$ with the join
$J=\big\langle\bigcup_{\,i=1}^{\,r} \,A_i\big\rangle$. 
Then in $G *_{A_1,\ldots,\,A_r} (t_1,\ldots,t_r)$ we have:
\begin{equation}
\label{EQ gemeral intersection in HNN multi-dimensional}
\textstyle 
G \cap \big\langle 
  \bigcup_{\,i=1}^{\,r} \,G^{t_i} \big\rangle
=J.
\end{equation}
\end{Lemma}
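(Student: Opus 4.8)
The plan is to prove both inclusions of \eqref{EQ gemeral intersection in HNN multi-dimensional}, obtaining the hard one from Corollary~\ref{CO NEW G*H lemma corollary} by induction on $r$. The inclusion $J\subseteq G\cap\big\langle\bigcup_{i=1}^r G^{t_i}\big\rangle$ is immediate: since $t_i$ fixes $A_i$, we have $A_i=A_i^{t_i}\le G^{t_i}$, while also $A_i\le G$; hence each $A_i$, and therefore their join $J$, lies in $G\cap\big\langle\bigcup_{i=1}^r G^{t_i}\big\rangle$. For the reverse inclusion I would \emph{not} try to split off $t_r$ naively, because Corollary~\ref{CO NEW G*H lemma corollary} requires the two factor-subgroups to meet the amalgamated subgroup in the \emph{same} group, whereas $\big\langle G^{t_1},\dots,G^{t_{r-1}}\big\rangle\cap G$ and $G^{t_r}\cap G=A_r$ need not coincide. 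I circumvent this by strengthening the statement: for every subgroup $C$ with $J\le C\le G$ I claim $\big\langle C,G^{t_1},\dots,G^{t_r}\big\rangle\cap G=C$. Taking $C=J$ recovers the lemma, since $A_i\le G^{t_i}$ gives $J\le\big\langle\bigcup_{i=1}^r G^{t_i}\big\rangle$, so the extra generator $C=J$ is absorbed.

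Two structural facts feed the induction. First, by the computation carried out inside the proof of Lemma~\ref{LE intersection in bigger group multi-dimensional}, the group $\Gamma=G *_{A_1,\dots,A_r}(t_1,\dots,t_r)$ decomposes as the amalgam $\Gamma=\Gamma_{r-1}*_G H_r$, where $\Gamma_{r-1}=\langle G,t_1,\dots,t_{r-1}\rangle=G *_{A_1,\dots,A_{r-1}}(t_1,\dots,t_{r-1})$ and $H_r=G *_{A_r} t_r$. Second, inside a single HNN-extension $G *_{A_i} t_i$ one has $G\cap G^{t_i}=A_i$ and $\langle G,G^{t_i}\rangle=G *_{A_i} G^{t_i}$ (the ``doubled base''; this follows from the normal form, in the spirit of Corollary~\ref{CO NEW G*t lemma corollary}).

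The base case $r=1$ of the strengthened claim is handled in this last amalgam. With $C\le G$ and $G^{t_1}$ as the two factor-subgroups of $G *_{A_1} G^{t_1}$, both meet $A_1$ in $A_1$ (using $A_1\le C$), so Corollary~\ref{CO NEW G*H lemma corollary}\,\eqref{PO 1 CO NEW G*H lemma corollary},\,\eqref{PO 3 CO NEW G*H lemma corollary} give $\langle C,G^{t_1}\rangle=C *_{A_1} G^{t_1}$ and $\langle C,G^{t_1}\rangle\cap G=C$. For the inductive step I work in $\Gamma=\Gamma_{r-1}*_G H_r$ and set $P=\langle C,G^{t_1},\dots,G^{t_{r-1}}\rangle\le\Gamma_{r-1}$ and $Q=\langle C,G^{t_r}\rangle\le H_r$. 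The induction hypothesis applied inside $\Gamma_{r-1}$ (legitimate since $A_1,\dots,A_{r-1}\le J\le C$) yields $P\cap G=C$, and the base case yields $Q\cap G=C$. The matching condition $P\cap G=C=Q\cap G$ now holds \emph{by design}, so Corollary~\ref{CO NEW G*H lemma corollary}\,\eqref{PO 1 CO NEW G*H lemma corollary},\,\eqref{PO 3 CO NEW G*H lemma corollary}, applied to the amalgam $\Gamma_{r-1}*_G H_r$, gives $\langle P,Q\rangle=P *_C Q$ and $\langle P,Q\rangle\cap\Gamma_{r-1}=P$. Intersecting further with $G\le\Gamma_{r-1}$ gives $\langle P,Q\rangle\cap G=P\cap G=C$, and since $\langle P,Q\rangle=\big\langle C,G^{t_1},\dots,G^{t_r}\big\rangle$, the strengthened claim follows; putting $C=J$ finishes the proof.

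The one non-mechanical point, which I expect to be the main obstacle, is precisely the intersection mismatch: the raw join $\big\langle G^{t_1},\dots,G^{t_{r-1}}\big\rangle\cap G$ on one side and $A_r$ on the other do not agree, so the hypothesis of Corollary~\ref{CO NEW G*H lemma corollary} fails unless one carries a common subgroup $C$ through the induction. Recognizing that the statement with an extra absorbed generator $C$ both implies the lemma (at $C=J$) and is self-reproducing under the amalgam decomposition is what makes the argument go through, and it lets me avoid the more delicate Britton-type bookkeeping that a direct normal-form attack on $\big\langle\bigcup_{i=1}^r G^{t_i}\big\rangle$ would require.
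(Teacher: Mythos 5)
Your proposal is correct and follows essentially the same route as the paper's own proof: the paper also fixes the intersection-mismatch problem by adjoining the join to each conjugate (working with $\langle J, G^{t_i}\rangle$ so that every factor-subgroup meets the amalgamated subgroup in exactly $J$), then merges these pairwise inside the nested amalgam decomposition supplied by Lemma~\ref{LE intersection in bigger group multi-dimensional} via Corollary~\ref{CO NEW G*H lemma corollary}, and finally absorbs $J$ into $\big\langle G^{t_1},\ldots,G^{t_r}\big\rangle$. Your only deviation is presentational: you parametrize the carried subgroup as a general $C$ with $J\le C\le G$ and run a formal induction on $r$, whereas the paper keeps $C=J$ fixed throughout and writes out the iteration for $r=3$.
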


\begin{proof}
For simplicity write the proof for the case $r\!=\!3$, i.e.,  $J=\langle \, A_1, A_2, A_3\rangle$.
By Lemma~\ref{LE intersection in bigger group multi-dimensional} we have a $\bigast$-construction:
$$
G *_{A_1,\,A_2,\, A_3} (t_1,t_2,t_3)
= \big( (G *_{A_1} t_1)
*_G
(G *_{A_2} t_2)\big) 
*_G
(G *_{A_3} t_3).
$$
$G *_{A_1} t_1$ contains 
$G *_{A_1} \! G^{t_1}$, and in this subgroup we  have $J\! \cap A_1 = A_1$ and $G^{t_1}\! \cap\, A_1 = A_1$, and so 
by \eqref{PO 3 CO NEW G*H lemma corollary} in Corollary~\ref{CO NEW G*H lemma corollary} have $\langle J, G^{t_1}\rangle \cap G =  J$.
For the same reason $\langle  J, G^{t_2}\rangle \cap G =  J$.

Noticing 
$\langle  J, G^{t_1}\!, G^{t_2} \rangle = \big\langle 
\langle  J, G^{t_1} \rangle,
\langle  J, G^{t_2} \rangle
\big\rangle$
and applying to it 
Corollary~\ref{CO NEW G*H lemma corollary}\;\eqref{PO 2 CO NEW G*H lemma corollary} inside the group $(G *_{A_1} t_1)
*_G
(G *_{A_2} t_2)$ 
we have $\langle J, G^{t_1}\!, G^{t_2} \rangle \cap G = J$.
Since also $\langle J, G^{t_3}\rangle \cap G = J$, we again by Corollary~\ref{CO NEW G*H lemma corollary}\;\eqref{PO 2 CO NEW G*H lemma corollary} have 
$
\big\langle
\langle J, G^{t_1}\!, G^{t_2} \rangle, \langle J, \,G^{t_3}\rangle 
\big\rangle \cap G = J.
$
But since $J \le \langle 
G^{t_1}\!, G^{t_2}\!, G^{t_3}\rangle $, it remains to notice that
$\big\langle
\langle J, G^{t_1}\!, G^{t_2} \rangle, \langle J, \,G^{t_3}\rangle 
\big\rangle= \langle 
G^{t_1}\!, G^{t_2}\!, G^{t_3}\rangle$.
\end{proof}

In view of 
Lemma~\ref{LE intersection in bigger group multi-dimensional}, the analogs of 
Lemma~\ref{LE intersection in HNN extension multi-dimensional} and 
Lemma~\ref{LE join in HNN extension multi-dimensional} also hold inside a suitable $\bigast$-construction $\bigast_{i=1}^{r}(K_i, L_i, t_i)_M$ with appropriate 
$M, K_i,L_i$, $i=1,\ldots,r$, as we will see shortly.

%%%%%%%%%%%%%%%%%%%%%%%%
%%%%%%%%%%%%%%%%%%%%%%%%
\subsection{Intersections and joins of benign subgroups}
\label{SU Intersections and joins of benign subgroups}

The $\bigast$-construction of \eqref{EQ *-construction short form} together with 
Lemma~\ref{LE intersection in bigger group multi-dimensional},
Lemma~\ref{LE intersection in HNN extension multi-dimensional} and 
Lemma~\ref{LE join in HNN extension multi-dimensional}
now provide a corollary which will be extensively used below, in \cite{A modified proof for Higman, 
On explicit embeddings of Q,
An explicit algorithm} 
and elsewhere.

\begin{Corollary}
\label{CO intersection and join are benign multi-dimensional}
If the subgroups $A_1,\ldots,\,A_r$ are benign in a finitely generated group $G$, then:
\begin{enumerate}
\item 
\label{PO 1 CO intersection and join are benign multi-dimensional}
their intersection $I=\bigcap_{\,i=1}^{\,r} \,A_i$ also is benign in $G$;
\item 
\label{PO 2 CO intersection and join are benign multi-dimensional}
their join $J=\big\langle\bigcup_{\,i=1}^{\,r} \,A_i\big\rangle$ also is benign in $G$.
\end{enumerate}
Moreover, if the finitely presented groups $K_i$ with their finitely generated subgroups $L_i$ can be 
given for each $A_i$ explicitly, then the respective finitely presented overgroups $K_I$ and $K_J$ with finitely generated  subgroups $L_I$ and $L_J$
can also be given for $I$ and for $J$ explicitly.
\end{Corollary}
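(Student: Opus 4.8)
The plan is to realise both $I$ and $J$ as honest intersections $G\cap L$ inside a single finitely presented overgroup of $G$, namely a suitable $\bigast$-construction, so that the three preceding lemmas do all the work. First I would unwind Definition~\ref{DE benign subgroup}: benignity of each $A_i$ supplies a finitely presented group $K_i\supseteq G$ together with a finitely generated subgroup $L_i\le K_i$ satisfying $G\cap L_i=A_i$. Replacing the $K_i$ by isomorphic copies amalgamated along their common subgroup $G$, I may assume $K_i\cap K_j=G$ for $i\neq j$; then with $M=G$ the data $G\le M\le K_1,\ldots,K_r$ meet the hypotheses of the $\bigast$-construction. Set $K=\bigast_{i=1}^{r}(K_i,L_i,t_i)_G$. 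By Lemma~\ref{LE finitey presented star construction}, since each $K_i$ is finitely presented while $G$ and the $L_i$ are finitely generated, the group $K$ is finitely presented, and it contains $G$ as the base.

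By Lemma~\ref{LE intersection in bigger group multi-dimensional}, inside $K$ the subgroup generated by $G$ and the stable letters is precisely the HNN-extension $\langle G,t_1,\ldots,t_r\rangle=G*_{A_1,\ldots,A_r}(t_1,\ldots,t_r)$, where $A_i=G\cap L_i$. For the intersection claim I would take $L_I=G^{t_1\cdots t_r}$, which is finitely generated as a conjugate of the finitely generated group $G$; then Lemma~\ref{LE intersection in HNN extension multi-dimensional} gives $G\cap L_I=I$. For the join claim I would take $L_J=\big\langle\bigcup_{i=1}^{r}G^{t_i}\big\rangle$, finitely generated as a join of finitely many conjugates of $G$; then Lemma~\ref{LE join in HNN extension multi-dimensional} gives $G\cap L_J=J$. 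Choosing $K_I=K_J=K$ in both cases thus exhibits $I$ and $J$ as benign subgroups of $G$.

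The one point deserving an explicit remark is that those two lemmas compute the intersection with $G$ inside the subgroup $\langle G,t_1,\ldots,t_r\rangle$, whereas benignity asks for the intersection inside the whole finitely presented group $K$. This is harmless: a set-theoretic intersection of two subgroups is independent of the ambient group, and since $G$ together with each $G^{t_i}$ (hence also $G^{t_1\cdots t_r}$ and $L_J$) all lie inside $\langle G,t_1,\ldots,t_r\rangle$, the equalities $G\cap L_I=I$ and $G\cap L_J=J$ persist verbatim in $K$. I do not expect a genuine obstacle anywhere in the argument; the only care needed is the standard amalgamation step arranging $K_i\cap K_j=G$, and the bookkeeping that $L_I$ and $L_J$ are finitely generated.

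Finally, the \emph{moreover} clause falls out of the same construction. If each $K_i$ is given by an explicit finite presentation and each $L_i$ by an explicit finite generating set, then Lemma~\ref{LE finitey presented star construction} produces an explicit finite presentation of $K_I=K_J=K$ (the relations of the $K_i$, the amalgamation relations identifying the copies of $G$, and the stable-letter relations $a^{t_i}=a$ as $a$ ranges over a generating set of $L_i$), while $L_I$ and $L_J$ are generated explicitly by the conjugates of the given generators of $G$ under the word $t_1\cdots t_r$, respectively under the individual $t_i$.
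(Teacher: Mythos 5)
Your proof is correct and follows essentially the same route as the paper's own argument: take $K_I=K_J=\bigast_{i=1}^{r}(K_i,L_i,t_i)_M$ with $M=G$, invoke Lemma~\ref{LE finitey presented star construction} for finite presentability, Lemma~\ref{LE intersection in bigger group multi-dimensional} to identify $\langle G,t_1,\ldots,t_r\rangle$ as $G*_{A_1,\ldots,A_r}(t_1,\ldots,t_r)$, and then lemmas \ref{LE intersection in HNN extension multi-dimensional} and \ref{LE join in HNN extension multi-dimensional} with $L_I=G^{t_1\cdots t_r}$ and $L_J=\big\langle\bigcup_{i=1}^{r}G^{t_i}\big\rangle$. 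If anything, you are slightly more explicit than the paper on two points it leaves tacit --- arranging $K_i\cap K_j=G$ by amalgamation, and observing that the intersection equalities computed inside the subgroup $\langle G,t_1,\ldots,t_r\rangle$ persist in the ambient group $K$.
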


\begin{figure}[h]
	\includegraphics[width=390px]{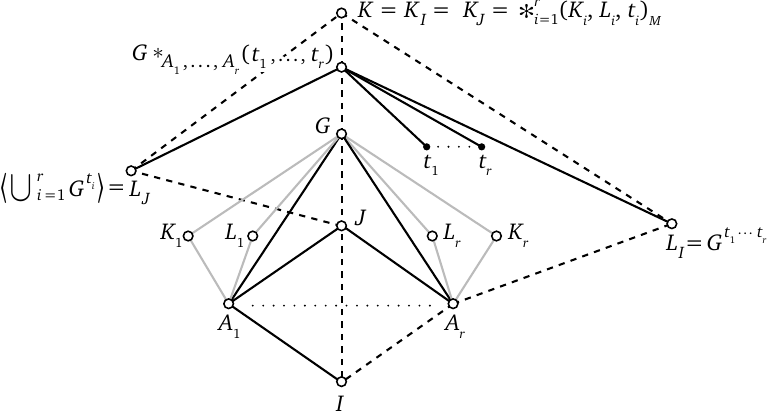}
	\caption{Construction of the group $K$ in Corollary~\ref{CO intersection and join are benign multi-dimensional}.} 
	\label{FI Figure_08_KI=KJ}
\end{figure}

\begin{proof}
As a finitely presented overgroup $K$ both for the above $I$ and for $J$ one may take the group $K=K_I=K_J=\textstyle{\bigast}_{i=1}^{r}(K_i, L_i, t_i)_M$ from \eqref{EQ *-construction short form}, see Lemma~\ref{LE finitey presented star construction}. 

For the intersection $I$ as a finitely generated subgroup we can take $L=L_I= G^{t_1 \cdots\, t_r}$. Then
$G$ and $L$ both are inside $\langle G, t_1,\ldots,t_r \rangle$ which is equal to $G *_{A_1,\ldots,\,A_r } \!(t_1,\ldots,t_r)$, i.e., to $\bigast_{i=1}^{r}(G, A_i, t_i)_G$
by Lemma~\ref{LE intersection in bigger group multi-dimensional}. And in the latter group 
$G \cap L = I$ holds by Lemma~\ref{LE intersection in HNN extension multi-dimensional}.

For the join $J$ as a finitely generated subgroup of $K$ take $L=L_J= \big\langle 
\bigcup_{\,i=1}^{\,r} \,G^{t_i} \big\rangle$. Then the groups
$G,L$ again are inside 
$$\textstyle 
\langle G, t_1,\ldots,t_r  \rangle=G *_{A_1,\ldots,\,A_r } \!(t_1,\ldots,t_r)={\bigast_{i=1}^{r}}(G, A_i, t_i)_G$$
by Lemma~\ref{LE intersection in bigger group multi-dimensional}. And in the latter  
$G \cap L = J$ holds by Lemma~\ref{LE join in HNN extension multi-dimensional}.
\end{proof}

%%%%%%%%%%%%%%%%%%%%%%%%
%%%%%%%%%%%%%%%%%%%%%%%%
\subsection{Free products inside $\bigast$-constructions}
\label{SU Free products inside} 
The following corollary allows to detect some free products inside $\bigast_{i=1}^{r}(K_i, L_i, t_i)_M$ and inside its subgroups, such as $G  *_{A_1,\ldots,\,A_r } \!(t_1,\ldots,t_r)$, whenever certain ``narrower'' free products are known inside $G$:

\begin{Corollary}
\label{CO smaller free product to the larger free product}
Let $A_1,\ldots,\,A_r$ be any subgroups in a group $G$ such that their join $J$ in $G$ is isomorphic to their free product
$\prod_{i=1}^{r} \,A_i$.  
Then the join
$\big\langle 
\bigcup_{\,i=1}^{\,r} G^{t_i} \big\rangle$
is isomorphic to the free product $\prod_{i=1}^{r} G^{t_i}$
in $G *_{A_1,\ldots,\,A_r } \!(t_1,\ldots,t_r)$, and hence in $\bigast_{i=1}^{r}(K_i, L_i, t_i)_M$.
\end{Corollary}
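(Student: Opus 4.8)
The plan is to reduce, exactly as in Lemma~\ref{LE join in HNN extension multi-dimensional}, to the HNN-extension $\Delta = G *_{A_1,\ldots,A_r}(t_1,\ldots,t_r)$, which by Lemma~\ref{LE intersection in bigger group multi-dimensional} sits inside the $\bigast$-construction and may be written as the iterated amalgam $(G *_{A_1} t_1) *_G \cdots *_G (G *_{A_r} t_r)$ over the common subgroup $G$. Since each $t_i$ fixes $A_i$, we have $A_i = A_i^{t_i} \le G^{t_i}$, so $J = \langle A_1,\ldots,A_r\rangle \le \langle\, \bigcup_i G^{t_i}\rangle$, and hence $\langle\, \bigcup_i G^{t_i}\rangle = \langle J, G^{t_1},\ldots,G^{t_r}\rangle$. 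Once the free product $\prod_i G^{t_i}$ is established inside $\Delta$, it automatically holds inside $\bigast_{i=1}^{r}(K_i,L_i,t_i)_M$, because $\Delta$ is a subgroup of the latter.

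First, inside each factor $G *_{A_i} t_i$ I would isolate the building block $Y_i := \langle J, G^{t_i}\rangle$. As in the proof of Lemma~\ref{LE join in HNN extension multi-dimensional}, the factor $G *_{A_i} t_i$ contains $G *_{A_i} G^{t_i}$, and applying Lemma~\ref{LE NEW subgroups in amalgamated product} to the subgroups $J \le G$ and $G^{t_i}$ (which meet the amalgamated $A_i$ in $J \cap A_i = A_i$ and $G^{t_i} \cap A_i = A_i$ respectively) gives $Y_i = J *_{A_i} G^{t_i}$ together with $Y_i \cap G = J$. Now the hypothesis $J \cong \prod_{k=1}^{r} A_k$ enters: amalgamating the free factor $A_i$ of $J$ with the copy $A_i \le G^{t_i}$ absorbs it into $G^{t_i}$, so that $Y_i = \big(\prod_{k\ne i} A_k\big) * G^{t_i}$, an ordinary free product of the remaining factors of $J$ with $G^{t_i}$.

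Next, since all the $Y_i$ meet the common amalgamated subgroup $G$ in the same subgroup $J$, I would glue them over $J$ by iterating Corollary~\ref{CO NEW G*H lemma corollary} precisely as in Lemma~\ref{LE join in HNN extension multi-dimensional}: at each stage the partial join meets $G$ in $J$, so point~\eqref{PO 1 CO NEW G*H lemma corollary} yields
$$\textstyle \big\langle\, \bigcup_i G^{t_i}\big\rangle = \langle Y_1,\ldots,Y_r\rangle = Y_1 *_J Y_2 *_J \cdots *_J Y_r,$$
the multiple amalgam of the $Y_i$ over their common subgroup $J$.

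It remains to show that this amalgam collapses to the free product $\prod_i G^{t_i}$, and this is the step I expect to carry the real content. I would exhibit mutually inverse homomorphisms between $P := Y_1 *_J \cdots *_J Y_r$ and $F := \prod_{i=1}^{r} G^{t_i}$. The map $\phi : F \to P$ sends the free factor $G^{t_i}$ identically onto $G^{t_i} \le Y_i \le P$; it is well defined because $F$ is a free product. The map $\psi : P \to F$ is defined on each factor $Y_i = \big(\prod_{k\ne i} A_k\big) * G^{t_i}$ by sending $G^{t_i}$ identically to $G^{t_i} \le F$ and sending, for $k \ne i$, the free factor $A_k$ to $A_k \le G^{t_k} \le F$. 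The crucial point is that these partial maps agree on the amalgamated subgroup $J$: since $J = \prod_k A_k$, a homomorphism out of $J$ is determined by its restrictions to the $A_k$, and every $\psi|_{Y_i}$ sends each $A_k$ to the \emph{same} element of $A_k \le G^{t_k}$ in $F$ (for $k=i$ through $G^{t_i}$, for $k\ne i$ through the free factor). Hence $\psi$ is well defined on $P$. A check on generators shows $\psi\phi = \mathrm{id}_F$ and $\phi\psi = \mathrm{id}_P$, the only subtlety being that a free-factor copy of $A_k$ appearing in $Y_i$ is identified in $P$, through the amalgamation over $J$, with the copy of $A_k$ inside $G^{t_k} \le Y_k$, so that both are sent consistently. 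Thus $\phi$ is an isomorphism carrying the free factors of $F$ onto the subgroups $G^{t_i}$, which is exactly the assertion that $\big\langle\, \bigcup_i G^{t_i}\big\rangle = \prod_i G^{t_i}$. The hypothesis that $J$ is the free product of the $A_i$ is used twice, to split each $Y_i$ as $\big(\prod_{k\ne i} A_k\big)*G^{t_i}$ and to guarantee the consistency of $\psi$ on $J$, and this is where I expect the main difficulty to lie.
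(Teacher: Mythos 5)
Your proof is correct, but it follows a genuinely different route from the paper's. The paper never decomposes the join $\big\langle \bigcup_{i} G^{t_i} \big\rangle$ itself; instead it works with the larger group $G^* = \big\langle G, G^{t_1},\ldots,G^{t_r}\big\rangle$, views it as the free product of the groups $G *_{A_i} G^{t_i}$ amalgamated over $G$, and then compares generators-and-relations: using the hypothesis $J \cong \prod_{i} A_i$ it extends the maps $\varphi_i : a_i \mapsto a_i^{t_i}$ to a single isomorphism $\varphi : J \to \prod_i A_i^{t_i}$, forms $G^{**} = G *_\varphi \prod_i G^{t_i}$, observes that $G^*$ and $G^{**}$ have the same presentation, and reads off $\prod_i G^{t_i}$ as a visible factor of $G^{**}$. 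You instead stay entirely within the subgroup machinery of Lemma~\ref{LE NEW subgroups in amalgamated product} and Corollary~\ref{CO NEW G*H lemma corollary}: you split the join as the multiple amalgam $Y_1 *_J \cdots *_J Y_r$ with $Y_i = \langle J, G^{t_i}\rangle = J *_{A_i} G^{t_i} = \big(\prod_{k\ne i} A_k\big) * G^{t_i}$, and then collapse that amalgam onto the abstract free product by exhibiting mutually inverse homomorphisms via universal properties; the freeness of $J$ enters twice (splitting each $Y_i$, and consistency of $\psi$ on $J$), whereas in the paper it enters once (existence of the common continuation $\varphi$). The paper's argument is shorter and yields extra information -- the structure $\big\langle G, \bigcup_i G^{t_i}\big\rangle = G *_\varphi \prod_i G^{t_i}$ of the full group, not just of the join -- while yours avoids the somewhat informal presentation-matching step (``this is nothing but the list we provided for $G^*$''), reuses only lemmas already proved in the paper, and produces as a by-product the finer decomposition of the join itself as an amalgam over $J$. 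Both proofs are sound; yours is arguably the more self-contained verification, the paper's the more economical one.
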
 

\begin{proof}
Since, for each $i=1,\ldots,r$ we have $\langle G, G^{t_i}\rangle = G *_{A_i} G^{t_i}$\!, and since all such 
subgroups intersect in $G$, and they together generate 
$G^* = \big\langle\! G,\,  G^{t_1}\!, \dots , G^{t_r} \! \big\rangle$, we see that $G^*$ is the free product of the groups $G *_{A_1} G^{t_1} ,\ldots, G *_{A_r} G^{t_r}$ all amalgamated in $G$. 

Thus, as generators and defining relations of $G^*$ we by default can list the following:
any set of generators in $G$ 
plus their copies in each $G^{t_i}$;
the relations $R$ for $G$, plus their copies $R^{t_i}$ for each $G^{t_i}$\!, plus the relations stating that  the copies of $G$ in each $G *_{A_i} G^{t_i}$ coincide, plus the relations stating that $a_i^{t_i}=a_i$ for arbitrary $a_i$ in each of $A_i$.

For each $i$ the map $\varphi_i:A_i \to G^{t_i}$ given by the rule
$\varphi_i:a_i \to a_i^{t_i}\in G^{t_i}$ is an isomorphism from  $A_i$ onto $A_i^{t_i}$.
Since $J$ is the \textit{free} product
$\prod_{i=1}^{r} \,A_i$, there exists a common continuation $\varphi$ (for all $\varphi_i$) from $J$ to the free product $\prod_{i=1}^{r} G^{t_i}$ (\textit{onto} its subgroup $\prod_{i=1}^{r} A^{t_i}$). 
Using this $\varphi$ we can construct the free product  $G^{**} = G *_\varphi \prod_{i=1}^{r} G^{t_i}$ with subgroups $J$ and $\prod_{i=1}^{r} A^{t_i}$ amalgamated by $\varphi$. 

As generators and defining relations of $G^{**}$ we can list the following: the generators earlier chosen for $G$ along with the relations $R$;
the copies of those generators in each $G^{t_i}$ and the copies $R^{t_i}$ of relations $R$;
plus the relations stating that $\varphi:J \cong \prod_{i=1}^{r} A^{t_i}$.
The latters can well be replaced by the relations stating that each $A_i$ in $J$ coincides with $A^{t_i}$ by $\varphi_i$.
But this is noting but the list we provided for $G^*$ earlier, and hence $G^*= G^{**}$.  Clearly, $G^{**}$ contains the free product $\prod_{i=1}^{r} G^{t_i}$ together with the subgroup $\prod_{i=1}^{r} A^{t_i}$  of the latter. 
\end{proof}

\begin{Remark}
Notice that the condition of this corollary about \textit{free} product of $A_i$ is relevant, and it cannot be omitted. Say, for the group $G *_{G,G}(t_1, t_2)$ 
we have $A_1=A_2=G$, and $A_1$, $A_2$ certainly do not generate their free product (unless $G$ is trivial). 
Then $g^{t_1}=g=g^{t_2}$ for any $g\in G$, and hence the subgroup $\big\langle\!   G^{t_1}, G^{t_2} \! \big\rangle = G$ is \textit{not} isomorphic to the free product $G^{t_1}* G^{t_2}$.
\end{Remark}

%%%%%%%%%%%%%%%%%%%%%%%%%%%%%%%%%%%%%%%%%%%%%%%%
%%%%%%%%%%%%%%%%%%%%%%%%%%%%%%%%%%%%%%%%%%%%%%%%
%%%%%%%%%%%%%%%%%%%%%%%%%%%%%%%%%%%%%%%%%%%%%%%%
\section{Infinitely generated benign subgroups in free groups of small rank}
\label{SE Infinitely generated benign subgroups}

%%%%%%%%%%%%%%%%%%%%%%%%%%%%%%%%%%%%%%%
\subsection{The elements $b_i, b_f, a_f$}
\label{SU the elements bi bf af} 

Fix a free group $\langle a,b,c \rangle$ of rank $3$, and for each integer $i\in \Z$ denote $b_i=b^{c^i}$\!.  Then for each sequence $f=(j_0,\ldots,j_{m-1})\in \E$, see Section~\ref{SU Integer-valued sequences}, define the following elements $b_f$ and $a_f$ in $\langle a,b,c \rangle$:
\begin{equation}
\label{EQ b_f a_f h_f g_f definition}
\begin{split}
\text{
$b_f=b_{0}^{j_0} \cdots b_{m-1}^{j_{m-1}}
\quad \text{ and } \quad 
a_f=a^{b_f} = b_f^{-1} \cdot a \cdot  b_f$.
}
\end{split}
\end{equation}
For example, if $f=(5,2,-7,1)$ then 
$b_f = b^5\, (b^2)^{c}\, (b^{-7})^{c^2} \,(b)^{c^3}$ and: 
$$
a_f = a^{b^5 (b^2)^{c} (b^{-7})^{c^2} (b)^{c^3}}\!= 
\Big(b^5\, (b^2)^{c}\, (b^{-7})^{c^2}\, (b)^{c^3}\Big)^{\!-1}\!\!\!\!
\cdot a \cdot b^5\, (b^2)^{c}\, (b^{-7})^{c^2}\, (b)^{c^3}\!\!.
$$
Further, for any subset $\mathcal X$ of $\E$ denote $A_{\mathcal X}=\langle a_f \mathrel{|} f\!\in \mathcal X \rangle$. The products of type \eqref{EQ b_f a_f h_f g_f definition} are correctly defined for each $f\in \mathcal X$, as the sequences of $\E$ all have \textit{finite} supports only.

%%%%%%%%%%%%%%%%%%%%%%%%%%%%%%%%%%%%%%%
\subsection{The isomorphisms $\xi_m$ and $\xi'_m$}
\label{SU The isomorphisms ksi} 
Using the notation of the previous point, for any integer $m$ a pair of isomorphisms $\xi_m$ and $\xi'_m$  can be defined on the free group $\langle b,c \rangle\cong F_2$:
\begin{equation}
\label{EQ definition of xi_m}
\text{
$\xi_m(b)=b_{-m+1}\quad
\xi'_m(b)=b_{-m}$
\quad and \quad
$\xi_m(c)=\xi'_m(c)=c^2$\!.
}
\end{equation}
It is easy to verify that for any $i$:
$$
\xi_m(b_i)=b_{2i-m+1}, 
\quad{\rm and}\quad\;
\xi'_m(b_i)=b_{2i-m}\,.
$$
For this value of $m$ fix a couple of stable letters $t_m, t'_m$ to define an HNN-extension: 
$$
\Xi_m = \langle b,c \rangle *_{\xi_m, \xi'_m} (t_m, t'_m).
$$

\begin{figure}[h]
	\includegraphics[width=390px]{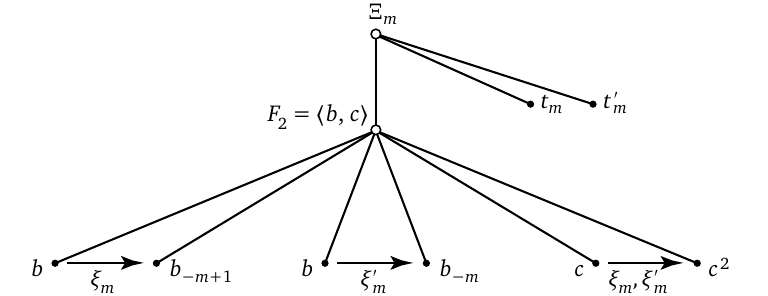}
	\caption{Construction of the group\, $\Xi_m$.} 
	\label{FI Figure_06_Xi_m}
\end{figure}

\begin{Lemma}
\label{LE Ksi}
For any $m$ in the above notation the following equalities hold in $\Xi_m$:
\begin{equation}
\label{EQ Ksi two equality}
\begin{split}
\langle b,c \rangle \cap \langle b_m, t_m, t'_m\rangle &= \langle b_m, b_{m+1},\ldots\rangle,
\\
\langle b,c \rangle \cap \langle b_{m-1}, t_m, t'_m\rangle &= \langle b_{m-1}, b_{m-2},\ldots\rangle.
\end{split}
\end{equation}
\end{Lemma}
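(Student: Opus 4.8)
The plan is to prove both equalities by the same method, carrying out the first in detail and noting that the second is symmetric (the ``upward'' family $b_m, b_{m+1}, \ldots$ being replaced by the ``downward'' family $b_{m-1}, b_{m-2}, \ldots$). Write $W=\langle b_m, b_{m+1}, \ldots\rangle$. The structural fact I would record first is that the normal closure $N=\langle b_i \mid i\in\Z\rangle$ of $b$ in $\langle b,c\rangle$ is the kernel of the retraction $\langle b,c\rangle \to \langle c\rangle$ killing $b$, hence is free on the family $\{b_i\}_{i\in\Z}$. Consequently $W$ and every index-set subgroup $\langle b_i \mid i\in S\rangle$ is free on its displayed generators, and for $S,T\subseteq\Z$ one has the coordinate-intersection identity $\langle b_i \mid i\in S\rangle \cap \langle b_i \mid i\in T\rangle = \langle b_i \mid i\in S\cap T\rangle$ (apply the retraction that kills the $b_i$ with $i\notin T$ to an element lying in both subgroups). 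I will use this identity repeatedly.

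Next I would establish the easy inclusion $W\subseteq \langle b_m, t_m, t'_m\rangle$. Using the conjugation rules $b_i^{t_m}=\xi_m(b_i)=b_{2i-m+1}$ and $b_i^{t'_m}=\xi'_m(b_i)=b_{2i-m}$, starting from $b_m$ one gets $b_m^{t_m}=b_{m+1}$, and from any already-obtained $b_{m+k}$ the two letters produce $b_{m+2k}$ and $b_{m+2k+1}$; a short induction shows every $b_i$ with $i\ge m$ is reached, with no gaps. Hence $\langle b_m, t_m, t'_m\rangle = \langle W, t_m, t'_m\rangle$, and since $W\subseteq\langle b,c\rangle$ this already yields the inclusion $\supseteq$ in the first equality of \eqref{EQ Ksi two equality}.

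For the reverse inclusion I would invoke the multiple-stable-letter form of Lemma~\ref{LE NEW subgroups in HNN-extension} (Remark~\ref{RE about multiple stable letters}) applied to the base $G=\langle b,c\rangle$ with $G'=W$. Here $t_m, t'_m$ realise the monomorphisms $\xi_m, \xi'_m$, whose common domain is all of $\langle b,c\rangle$ and whose images are $\langle b_{-m+1}, c^2\rangle$ and $\langle b_{-m}, c^2\rangle$; thus $W\cap A=W$, and the hypotheses to verify are $\xi_m(W)=W\cap\langle b_{-m+1}, c^2\rangle$ together with $\xi'_m(W)=W\cap\langle b_{-m}, c^2\rangle$. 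This compatibility check is the crux. On one side $\xi_m(W)=\langle b_{2i-m+1}\mid i\ge m\rangle = \langle b_i\mid i\ge m+1,\ i\equiv m+1 \pmod 2\rangle$. On the other side the kernel of $\langle b_{-m+1}, c^2\rangle\to\langle c\rangle$ is the normal closure of $b_{-m+1}$, namely $\langle b_i\mid i\equiv 1-m \pmod 2\rangle$, so by the coordinate-intersection identity $W\cap\langle b_{-m+1}, c^2\rangle = \langle b_i\mid i\ge m,\ i\equiv 1-m \pmod 2\rangle$. Since $m\not\equiv 1-m \pmod 2$, the index $i=m$ is excluded by parity and the two index sets coincide. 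The parallel computation $\xi'_m(W)=\langle b_i\mid i\ge m,\ i\equiv m \pmod 2\rangle$ against $W\cap\langle b_{-m}, c^2\rangle=\langle b_i\mid i\ge m,\ i\equiv -m \pmod 2\rangle$ gives the second identity, here with $i=m$ included on both sides.

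With both compatibility conditions verified, Lemma~\ref{LE NEW subgroups in HNN-extension} yields $\langle W, t_m, t'_m\rangle \cap \langle b,c\rangle = W$, which is exactly the first equality of \eqref{EQ Ksi two equality}. The second equality follows verbatim with $W'=\langle b_{m-1}, b_{m-2}, \ldots\rangle$: the generation step uses $b_{m-1}^{t'_m}=b_{m-2}$ (noting that $t_m$ fixes $b_{m-1}$) to reach every $b_i$ with $i\le m-1$, and the compatibility identities become $\xi_m(W')=W'\cap\langle b_{-m+1}, c^2\rangle$ and $\xi'_m(W')=W'\cap\langle b_{-m}, c^2\rangle$, checked by the same parity-and-range count. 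I expect the main obstacle to be precisely this parity-and-range bookkeeping: for each of the four identities one must confirm that the relevant boundary index is included or excluded exactly according to its parity, so that the image subgroup and the intersection match on the nose, since a single off-by-one here would break the hypothesis of the subgroup lemma.
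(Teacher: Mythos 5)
Your proof is correct, but it reaches the hard inclusion by a genuinely different route than the paper. The paper's own proof is self-contained and combinatorial: after the same easy generation step (the table of actions $b_i^{t_m}=b_{2i-m+1}$, $b_i^{t'_m}=b_{2i-m}$), it argues directly that when a word $w$ in $b_m, t_m, t'_m$ is brought to normal form in $\Xi_m$, the cancellations $t_m^{-1}b_mt_m=b_{m+1}$, ${t'}_m^{-1}b_mt'_m=b_m$, etc.\ never produce any $b_i$ outside $\langle b_m,b_{m+1},\ldots\rangle$, so if $w\in\langle b,c\rangle$ its normal form is a word in those $b_i$ alone. You instead reduce the statement to the paper's own structural machinery --- Lemma~\ref{LE NEW subgroups in HNN-extension} in its multi-stable-letter form (Remark~\ref{RE about multiple stable letters}) applied with $G'=W$ --- and discharge its hypothesis $\xi_m(W)=W\cap\langle b_{-m+1},c^2\rangle$, $\xi'_m(W)=W\cap\langle b_{-m},c^2\rangle$ by explicit parity-and-range computations inside the free group on $\{b_i\}_{i\in\Z}$, using the retraction/coordinate-intersection identities. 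Both arguments are sound; your verification of the four boundary-index cases is accurate (in particular $i=m$ excluded for $\xi_m$, included for $\xi'_m$, and dually for $W'$). What your route buys is rigor and extra structure: the informal claim ``we never get any $b_i$ outside'' in the paper really conceals an induction over the reduction process, whereas your compatibility check is a finite, verifiable computation, and as a bonus point~\eqref{PO1 LE NEW subgroups in HNN-extension} of the lemma exhibits $\langle b_m,t_m,t'_m\rangle$ as an HNN-extension of $W$ with base the free group on $b_m,b_{m+1},\ldots$. What it costs is reliance on the multi-stable-letter adaptation with \emph{distinct, non-identity} associated isomorphisms, which the paper only sketches in Remark~\ref{RE about multiple stable letters} (stating the explicit formula only for the case of letters fixing a common subgroup), so a fully self-contained write-up would need to spell out that adaptation.
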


\begin{proof}
We are going to prove only the first equality. 
For any $m$ and $i$ we have 
$b_{i}^{t_m} = \xi_m(b_i)=b_{2i-m+1}$ and 
$b_{i}^{t'_m} = \xi'_m(b_i)=b_{2i-m}$, and from here we get the following list for actions of $t_m$ and $t'_{m}$ on the elements $b_i$:
\begin{equation}
\label{EQ xi action}
\begin{split}
\ldots
b_{m-2}^{t_m} \!=\! b_{m-3},\;\;\;
b_{m-1}^{t_m} \!=\! b_{m-1},\;\;\;
b_{m}^{t_m}   \!=\! b_{m+1},\;\;\;
b_{m+1}^{t_m} \!=\! b_{m+3},\;\;\;
b_{m+2}^{t_m} \!=\! b_{m+5},
\ldots\\
\ldots
b_{m-2}^{t'_m} \!=\! b_{m-4},\;\;\;
b_{m-1}^{t'_m} \!=\! b_{m-2},\;\;\;
b_{m}^{t'_m}   \!=\! b_{m},\;\;\;
b_{m+1}^{t'_m} \!=\! b_{m+2},\;\;\;
b_{m+2}^{t'_m} \!=\! b_{m+4},\ldots
\;\;
\end{split}
\end{equation}
The \textit{inverse} actions of $t_m^{-1}$ and of ${t'}_{\!\!\!m}^{\,-1}$ can be understood from the list above by just ``swapping'' its two rows.

From \eqref{EQ xi action} it is straightforward that each of $b_m, b_{m+1},\ldots$ indeed is in $\langle b_m, t_m, t'_m\rangle$. 
For example, 
$b_{m+8}
=b_{m+4}^{t'_m}
=b_{m+2}^{{t'}_{\!\!\!m}^2}
=b_{m+1}^{{t'}_{\!\!\!m}^3}
=b_{m}^{t_m \cdot {t'}_{\!\!\!m}^3}\in \langle b_m, t_m, t'_m\rangle$. 

And on the other hand, bringing any word $w$ on letters $b_m, t_m, t'_m$ to the normal form in 
$\Xi_m$
we first have to  do cancellations like $t_m^{-1} b_m t_m=b_{m+1}$, and   ${t'}_{\!\!\!m}^{\,-1} b_{m} {t'}_{\!\!\!m}=b_{m}$. Repeated applications of such steps may create in $w$ some new letters 
$b_m, b_{m+1},\ldots$ so that we may also have to do ``reverse'' cancellations like 
$t_m b_{m+1} t_m^{-1}=b_{m}$, 
\; 
$t_m b_{m+3} t_m^{-1}=b_{m+1}$, etc...
or \,
$t'_m b_m {t'}_m^{-1}=b_{m}$,
\;
$t'_m b_{m+2} {t'}_m^{-1}=b_{m+1}$, etc...

As we see, bringing $w$ to normal form we \textit{never} get any $b_i$ outside $\langle b_m, b_{m+1},\ldots\rangle$. If, in addition, $w$ is in $\langle b,c \rangle$, then the normal form we obtained should contain no letters 
$t_m^{\pm 1}$ or ${t'}_{\!\!\!m}^{\pm 1}$\!.\,
That is, if $w$ is in $\langle b,c \rangle$, it in fact is in $\langle b_m, b_{m+1},\ldots\rangle$, and we have $\langle b,c \rangle \cap \langle b_m, t_m, t'_m\rangle = \langle b_m, b_{m+1},\ldots\rangle$.
\end{proof}

Since the free group $\langle a,b,c \rangle$ can be written as a free product $\langle a \rangle * \langle b,c \rangle$, and since $\Xi_m$ does not involve any $a$, then
using the first of the equalities \eqref{EQ Ksi two equality} in Lemma~\ref{LE Ksi} (or using uniqueness of representations of elements in free products)
we in the free product $\Theta=\langle a \rangle * \,\Xi_m$ have: 

\begin{Lemma}
\label{LE Ksi for G}
For any $m$ in the above notation the following equalities hold in $\Theta=\langle a \rangle * \,\Xi_m$:
\begin{equation}
\label{EQ a simple example of bening subgroup}
\begin{split}
G \cap \langle b_m, t_m, t'_m\rangle = \langle b_m, b_{m+1},\ldots\rangle
\;\;\; {\it and} \;\;\;  
G \cap \langle a, b_m, t_m, t'_m\rangle = \langle a, b_m, b_{m+1},\ldots\rangle,
\hskip3mm \\
G \! \cap \!\langle b_{m-1}, t_m, t'_m\rangle  \! =  \! \langle b_{m-1}, b_{m-2},\ldots\rangle
\;\; {\it and} \;\;\,  
G \! \cap \! \langle a, b_{m-1}, t_m, t'_m\rangle \!  =  \! \langle a, b_{m-1}, b_{m-2},\ldots\rangle.
\end{split}
\end{equation}
\end{Lemma}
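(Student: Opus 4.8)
The plan is to deduce Lemma~\ref{LE Ksi for G} directly from Lemma~\ref{LE Ksi} by exploiting the free-product structure $\Theta = \langle a \rangle * \Xi_m$ and the freeness of $G=\langle a,b,c\rangle = \langle a\rangle * \langle b,c\rangle$. The key observation is that $\Xi_m$ is built entirely from the base group $\langle b,c\rangle$ and the stable letters $t_m,t'_m$, so that the letter $a$ is genuinely ``free'' over all of $\Xi_m$. Concretely, I would first record that $\Theta = \langle a\rangle * \Xi_m$, and that inside $\Theta$ the subgroup $\Xi_m=\langle b,c\rangle *_{\xi_m,\xi'_m}(t_m,t'_m)$ sits as a free factor.

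First I would prove the two equalities not involving $a$. These follow from Lemma~\ref{LE Ksi} together with a free-product argument. Since both $\langle b,c\rangle$ and $\langle b_m, t_m, t'_m\rangle$ lie entirely in the free factor $\Xi_m$, their intersection computed in $\Theta$ is the same as their intersection computed in $\Xi_m$; the letter $a$ cannot help produce new elements of $\langle b,c\rangle$ because any element of $\langle b,c\rangle$ already lies in $\Xi_m$ and the free product $\langle a\rangle * \Xi_m$ detects this via uniqueness of normal forms. Hence Lemma~\ref{LE Ksi} gives $G\cap \langle b_m,t_m,t'_m\rangle = \langle b,c\rangle \cap \langle b_m,t_m,t'_m\rangle = \langle b_m,b_{m+1},\ldots\rangle$, and symmetrically for the $b_{m-1}$ equality, using the second line of \eqref{EQ Ksi two equality}.

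Next I would handle the two equalities that do involve $a$. The plan is to adjoin the free factor $\langle a\rangle$ on both sides. For the subgroup $\langle a, b_m, t_m, t'_m\rangle$, I would note that it equals $\langle a\rangle * \langle b_m, t_m, t'_m\rangle$ inside $\Theta = \langle a\rangle * \Xi_m$, since $\langle b_m,t_m,t'_m\rangle \le \Xi_m$ and $\langle a\rangle$ is a free factor disjoint from $\Xi_m$. Then an element of $G=\langle a\rangle * \langle b,c\rangle$ lying in $\langle a\rangle * \langle b_m,t_m,t'_m\rangle$ must, by comparing normal forms with respect to the free factor $\langle a\rangle$, have all its $\langle b,c\rangle$-syllables lying in $\langle b_m,t_m,t'_m\rangle \cap \langle b,c\rangle = \langle b_m,b_{m+1},\ldots\rangle$; conversely every such element is visibly in the right-hand group. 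This yields $G\cap \langle a,b_m,t_m,t'_m\rangle = \langle a,b_m,b_{m+1},\ldots\rangle$, and the $b_{m-1}$ version follows identically.

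The main obstacle, though it is a mild one, is making the free-factor bookkeeping fully rigorous: I must be careful that $\langle a, b_m, t_m, t'_m\rangle$ really decomposes as a free product $\langle a\rangle * \langle b_m,t_m,t'_m\rangle$ rather than some amalgam, which holds precisely because $a$ does not occur in $\Xi_m$ and therefore $\langle a\rangle \cap \Xi_m = \1$. Once this is granted, both assertions reduce to the statement that intersecting with $G$ commutes with adjoining the free factor $\langle a\rangle$ — an instance of the uniqueness of normal forms in free products, exactly as the paragraph preceding the lemma already indicates. Indeed the cleanest route is simply to invoke that preceding remark: the results transfer from $\langle b,c\rangle$ to $G=\langle a\rangle*\langle b,c\rangle$ by uniqueness of representations in free products, so the proof is essentially a one-line reduction to Lemma~\ref{LE Ksi}.
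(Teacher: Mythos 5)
Your proposal is correct and follows essentially the same route as the paper: the paper's entire justification is the remark preceding the lemma, namely that $G=\langle a\rangle *\langle b,c\rangle$, that $\Xi_m$ involves no $a$, and that the equalities then transfer from Lemma~\ref{LE Ksi} by uniqueness of normal forms in the free product $\Theta=\langle a\rangle *\Xi_m$. Your write-up merely makes explicit the syllable-by-syllable comparison (and the free decomposition $\langle a,b_m,t_m,t'_m\rangle=\langle a\rangle *\langle b_m,t_m,t'_m\rangle$) that the paper leaves implicit.
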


%%%%%%%%%%%%%%%%%%%%%%%%%%%%%%%%%%%%%%%
\subsection{Benign subgroups in $\langle b,c \rangle$ and in $\langle a, b,c \rangle$}
\label{SU Benign subgroups in b c and a b c} 

Lemma~\ref{LE Ksi} and Lemma~\ref{LE Ksi for G} display some examples of infinitely generated benign subgroups in $\langle b,c \rangle$ and in $\langle a, b,c \rangle$:

\begin{Corollary}
\label{CO two types of benign subgruops in <b,c>}
Any of its subgroups of the following two types:
$$
\langle b_m, b_{m+1},\ldots\rangle,
\quad
\langle b_{m-1}, b_{m-2},\ldots\rangle.
$$
is benign in $\langle b,c \rangle$ for any $m$.
\end{Corollary}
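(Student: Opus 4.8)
The plan is to verify the definition of benign subgroup directly, using $\Xi_m$ as the ambient finitely presented overgroup and reading off the two required intersections from Lemma~\ref{LE Ksi}. The group $G=\langle b,c \rangle$ is free of rank $2$, hence finitely generated (in fact finitely presented), so it is meaningful to ask whether its subgroups are benign according to Definition~\ref{DE benign subgroup}.

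First I would check that $\Xi_m = \langle b,c \rangle *_{\xi_m, \xi'_m} (t_m, t'_m)$ is finitely presented. It is an HNN-extension of the finitely presented base group $\langle b,c \rangle$ by the two stable letters $t_m, t'_m$, and the associated subgroups are the domain $\langle b,c \rangle$ together with the images $\xi_m(\langle b,c \rangle)=\langle b_{-m+1}, c^2\rangle$ and $\xi'_m(\langle b,c \rangle)=\langle b_{-m}, c^2\rangle$ of the defining isomorphisms, all of which are generated by two elements. Since an HNN-extension of a finitely presented group along finitely generated associated subgroups is again finitely presented, $\Xi_m$ is finitely presented; moreover the base group $\langle b,c \rangle = G$ embeds into $\Xi_m$ in the usual way.

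For the first subgroup type I would set $K=\Xi_m$ and take the finitely generated subgroup $L=\langle b_m, t_m, t'_m\rangle$ (three generators). Then the first equality of Lemma~\ref{LE Ksi} yields precisely $G \cap L = \langle b_m, b_{m+1},\ldots\rangle$, which is the condition $G \cap L = H$ of Definition~\ref{DE benign subgroup}; hence $\langle b_m, b_{m+1},\ldots\rangle$ is benign in $G$. For the second type I would keep the same finitely presented $K=\Xi_m$ but take $L=\langle b_{m-1}, t_m, t'_m\rangle$, again finitely generated, and invoke the second equality of Lemma~\ref{LE Ksi} to get $G \cap L = \langle b_{m-1}, b_{m-2},\ldots\rangle$, establishing benignity of that subgroup as well.

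Since Lemma~\ref{LE Ksi} already supplies both intersection equalities, there is essentially no remaining obstacle: the only point demanding a moment's care is confirming the finite presentability of $\Xi_m$, which reduces to the observation that the associated subgroups of the HNN-extension are finitely generated. Everything else is a direct unwinding of the definition of benignity.
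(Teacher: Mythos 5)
Your proposal is correct and is exactly the argument the paper intends: the corollary is stated as an immediate consequence of Lemma~\ref{LE Ksi}, with $K=\Xi_m$ (finitely presented, since the HNN relations $b^{t_m}=b_{-m+1}$, $b^{t'_m}=b_{-m}$, $c^{t_m}=c^{t'_m}=c^2$ are finitely many over the finitely presented base $\langle b,c\rangle$) and $L=\langle b_m, t_m, t'_m\rangle$ or $L=\langle b_{m-1}, t_m, t'_m\rangle$ as the finitely generated subgroup realizing the required intersection. Your only added content --- checking finite presentability of $\Xi_m$ --- is a point the paper leaves implicit, and you resolve it correctly.
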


\begin{Corollary}
\label{CO four types of benign subgruops in G}
Any of its subgroups of  the following  four types:
$$
\langle b_m, b_{m+1},\ldots\rangle,
\quad
\langle a, b_m, b_{m+1},\ldots\rangle,
\quad
\langle b_{m-1}, b_{m-2},\ldots\rangle,
\quad
\langle a, b_{m-1}, b_{m-2},\ldots\rangle
$$
is benign in $\langle a,b,c \rangle$ for any $m$.
\end{Corollary}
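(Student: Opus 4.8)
The plan is to derive Corollary~\ref{CO four types of benign subgruops in G} directly from Lemma~\ref{LE Ksi for G}, essentially reading off the four benign-subgroup witnesses from the four equalities displayed in \eqref{EQ a simple example of bening subgroup}. Recall that by Definition~\ref{DE benign subgroup}, to show a subgroup $H\le G=\langle a,b,c\rangle$ is benign it suffices to embed $G$ into a finitely presented group $K$ and to exhibit a finitely generated $L\le K$ with $G\cap L=H$. The group $\Theta=\langle a\rangle * \Xi_m$ will serve as $K$ in all four cases, so the first step is to check that $\Theta$ is finitely presented and that $G$ embeds in it. Finite presentability is immediate: $\Xi_m=\langle b,c\rangle *_{\xi_m,\xi'_m}(t_m,t'_m)$ is an HNN-extension of the finitely presented free group $\langle b,c\rangle$ by two stable letters acting by the explicitly given isomorphisms $\xi_m,\xi'_m$ of the finitely generated (indeed finitely presented) subgroups $\langle b,c\rangle$ onto itself, hence $\Xi_m$ is finitely presented; and $\Theta=\langle a\rangle*\Xi_m$ is then a free product of two finitely presented groups, so it too is finitely presented. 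That $G=\langle a,b,c\rangle$ embeds in $\Theta$ is built into the setup, since $\langle a,b,c\rangle=\langle a\rangle*\langle b,c\rangle$ sits inside $\langle a\rangle * \Xi_m$ as the obvious subgroup.

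Next I would verify that each of the four candidate subgroups is finitely generated, so that it qualifies as the subgroup $L$ in the definition. The four candidates are $L_1=\langle b_m,t_m,t'_m\rangle$, $L_2=\langle a,b_m,t_m,t'_m\rangle$, $L_3=\langle b_{m-1},t_m,t'_m\rangle$, and $L_4=\langle a,b_{m-1},t_m,t'_m\rangle$; each is generated by finitely many elements of $\Theta$ (namely one or two group generators together with the two stable letters), hence is finitely generated. With these in hand the conclusion is just the pairing of each $L_j$ with its corresponding equality from Lemma~\ref{LE Ksi for G}: the identity $G\cap\langle b_m,t_m,t'_m\rangle=\langle b_m,b_{m+1},\ldots\rangle$ shows $\langle b_m,b_{m+1},\ldots\rangle$ is benign in $G$ with witnesses $K_H=\Theta$ and $L_H=L_1$; the identity $G\cap\langle a,b_m,t_m,t'_m\rangle=\langle a,b_m,b_{m+1},\ldots\rangle$ handles the second type with $L_H=L_2$; and the two remaining equalities of \eqref{EQ a simple example of bening subgroup} handle the third and fourth types with $L_H=L_3$ and $L_H=L_4$ respectively.

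There is really no serious obstacle here, since the hard combinatorial content has already been absorbed into Lemma~\ref{LE Ksi} and transported to the free product $\Theta$ by Lemma~\ref{LE Ksi for G}; the corollary is a matter of correctly matching Definition~\ref{DE benign subgroup} against the four displayed intersections. The only points that warrant a sentence of care are the two that I flag above: confirming finite presentability of $\Theta$ (which rests on the fact, used implicitly throughout the paper, that an HNN-extension of a finitely presented group by finitely many stable letters acting on finitely generated associated subgroups is finitely presented, and that a free product of finitely presented groups is finitely presented) and confirming that each witness $L_j$ is genuinely finitely generated as a subgroup of $\Theta$. Both are routine. I would therefore write the proof as a short paragraph: name $\Theta$ as the common finitely presented overgroup $K$, observe $G\hookrightarrow\Theta$, note that the four subgroups $L_1,\ldots,L_4$ are finitely generated, and then invoke the four equalities of Lemma~\ref{LE Ksi for G} to read off $G\cap L_j=H_j$ for each of the four subgroup types, concluding benignity in each case.
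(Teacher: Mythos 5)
Your proposal is correct and is essentially the paper's own (implicit) argument: the corollary is presented as an immediate consequence of Lemma~\ref{LE Ksi for G}, with $K=\Theta=\langle a\rangle * \Xi_m$ serving as the finitely presented overgroup and the four finitely generated subgroups $\langle b_m,t_m,t'_m\rangle$, $\langle a,b_m,t_m,t'_m\rangle$, $\langle b_{m-1},t_m,t'_m\rangle$, $\langle a,b_{m-1},t_m,t'_m\rangle$ as the witnesses $L$ in Definition~\ref{DE benign subgroup}. One small correction of wording: $\xi_m$ and $\xi'_m$ are isomorphisms of $\langle b,c\rangle$ onto the \emph{proper} subgroups $\langle b_{-m+1},c^2\rangle$ and $\langle b_{-m},c^2\rangle$ (so $\Xi_m$ is an ascending HNN-extension), not onto $\langle b,c\rangle$ itself --- but this does not affect your finite-presentability argument, since all it requires is a finitely presented base group and finitely generated associated subgroups with finitely many stable letters.
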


Moreover, applying the $\bigast$-construction given in \eqref{EQ *-construction short form},
and later used in the proof of
Corollary~\ref{CO intersection and join are benign multi-dimensional}, we can merge the subgroups of the above types to get further samples of benign subgroups in $\langle b,c \rangle$ or in $\langle a, b,c \rangle$. Here is an example of application of this idea:

\begin{figure}[h]
	\includegraphics[width=390px]{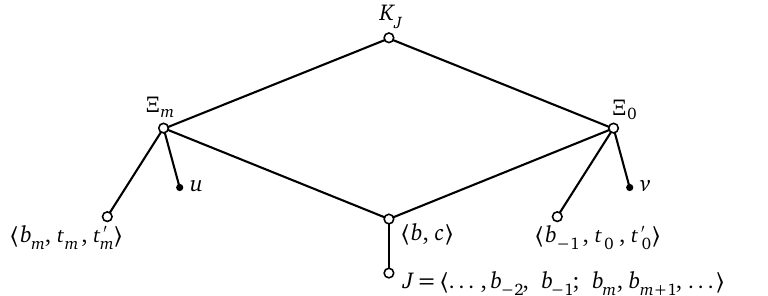}
	\caption{Construction of the group $K_J$ in Example~\ref{EX benign sample ONE}.} 
	\label{FI Figure_07_KJ}
\end{figure}

\begin{Example}
\label{EX benign sample ONE}
The subgroup\, 
$\langle \ldots b_{-2}, b_{-1};\;
b_m, b_{m+1},\ldots\rangle$\,
is benign in $\langle b,c \rangle$ and in $\langle a, b,c \rangle$ for arbitrary (non-negative) integer $m$.
Indeed, first build the $\bigast$-construction:
\begin{equation}
\label{EQ benign sample ONE}
K_J =
\left(\Xi_m *_{\langle b_m, t_m, t'_m\rangle} u \right)
\, *_{\langle b,c\rangle}
\left(\Xi_0 *_{\langle b_{-1}, t_0, t'_0\rangle} v \right)
\end{equation}
from Chapter~\ref{SU The star construction} for the groups
$K_1 = \Xi_m$,\;
$K_2 = \Xi_0$,\;
$M = \langle b,c\rangle$.
The group $K_J$ clearly is finitely presented, see Lemma~\ref{LE finitey presented star construction}. Then by Lemma~\ref{LE intersection in bigger group multi-dimensional} our $K_J$ contains the subgroup: 
$$
\big\langle \langle b,c\rangle,\, u, v \big\rangle
=
\langle b,c\rangle *_{\langle b_m, b_{m+1},\ldots  \rangle,\;\langle b_{-1}, b_{{-2}},\ldots  \rangle} (u,v).
$$
This subgroup may no longer be finitely presented, as $\langle b_m, b_{m+1},\ldots  \rangle$ and $\langle b_{-1}, b_{{-2}},\ldots  \rangle$ are not finitely generated. But by Lemma~\ref{LE join in HNN extension multi-dimensional},
inside this subgroup, for the join $$J=\langle \ldots b_{-2}, b_{-1};\;
b_m, b_{m+1},\ldots\rangle$$ 
of two subgroup $\langle b_m, b_{m+1},\ldots  \rangle$ and  $\langle b_{-1}, b_{{-2}},\ldots  \rangle$,
we have 
$$
\langle b,c\rangle \cap \big\langle 
\langle b,c\rangle^u, \langle b,c\rangle^v
\big\rangle = J.
$$
Hence $J$ is benign, and as the respective finitely generated subgroup $L_J$ of \eqref{EQ benign sample ONE} one may pick the $4$-generator subgroup $L_J = \big\langle 
\langle b,c\rangle^u, \langle b,c\rangle^v
\big\rangle$ of $K_J$.

And to show that $J$ is benign also in $\langle a,b,c\rangle$ just use a slightly larger finitely presented overgroup $\langle a\rangle * K_J$.\end{Example}

Constructions similar to that of
Example~\ref{EX benign sample ONE} are very often used in \cite{A modified proof for Higman, 
On explicit embeddings of Q,
An explicit algorithm} and in related research.

\vskip7mm

%%%%%%%%%%%%%%%%%%%%%%%%%%%%%%%%%%%%%

%%%%%%%%%%%%%%%%%%%%%%%%%
\begin{comment}

{\footnotesize
\vskip4mm
\begin{tabular}{l l}
Informatics and Appl.~Mathematics~Department
& 
College of Science and Engineering\\

Yerevan State University
&
American University of Armenia\\

Alex Manoogian 1 
& 
Marshal Baghramyan Avenue 40\\

Yerevan 0025, Armenia
&
Yerevan 0019, Armenia\\

\end{tabular}
}
\end{comment}
%%%%%%%%%%%%%%%%%%%%%%%%%

\medskip
\noindent 
E-mail:
\href{mailto:v.mikaelian@gmail.com}{v.mikaelian@gmail.com}%, \href{mailto:vmikaelian@ysu.am}{vmikaelian@ysu.am}
$\vphantom{b^{b^{b^{b^b}}}}$

\noindent 
Web: 
\href{https://www.researchgate.net/profile/Vahagn-Mikaelian}{researchgate.net/profile/Vahagn-Mikaelian}


\begin{thebibliography}{88}

\bibitem{Aanderaa}
S.O. Aanderaa, 
\textit{A proof of Higman's embedding theorem using Britton extensions of groups},
Word Probl., Decision Probl. Burnside Probl. Group Theory, Studies Logic Foundations Math. 71 (1973), 1--18. 

\bibitem{on Higman's sequence building}
V.S. Atabekyan, V.\,H. Mikaelian, 
\textit{On Benign Subgroups Constructed by Higman's Sequence Building Operation}, 
J. Contemp. Mathemat. Anal., 59 (2024), 1--12.


\bibitem{Bogopolski}
O. Bogopolski, 
\textit{Introduction to group theory}, Textbooks in Mathematics, Z{\"u}rich, EMS (2008). 

\bibitem{De La Harpe 2000}
P. de la Harpe, 
\textit{Topics in Geometric Group Theory},
Chicago Lectures in Mathematics Chicago, The University of Chicago Press (2000).

\bibitem{Higman Subgroups of fP groups}
G. Higman, 
{\it Subgroups of finitely presented groups}, Proc. R. Soc. Ser. A (1961), 262, 455--475.

\bibitem{Kargapolov Merzljakov}
M.I. Kargapolov, J.I. Merzljakov,
\textit{Fundamentals of the Theory of Groups},
Graduate Texts in Mathematics 62,
New York, Heidelberg, Berlin, Springer-Verlag (1979). 


\bibitem{Lyndon Schupp}
R.C. Lyndon, P.\,E. Schupp, 
{\it Combinatorial group theory}, 
Ergebnisse der Mathematik und ihrer Grenzgebiete 89, Berlin, Heidelberg, New York, Springer-Verlag (1977). 

%%%%%%%%%%%%%%%%%%%%%%%

\bibitem{metabelian}
V.\,H. Mikaelian, 
\textit{Metabelian varieties of groups and wreath products of abelian groups}, J. Algebra, 2007 (313), 2, 455--485. 

\bibitem{On abelian subgroups of}
V.\,H. Mikaelian, A.Yu. Olshanskii, 
\textit{On abelian subgroups of finitely generated metabelian groups}, J. Group Theory, 16 (2013), 695--705.

\begin{comment}
\bibitem{Subvariety structures}
V.\,H. Mikaelian,
\textit{Subvariety structures in certain product varieties of groups}, J. Group Theory, 21 (2018), 5, 865--884.
\end{comment}

\bibitem{Embeddings using universal words}
V.\,H. Mikaelian,
\textit{Embeddings using universal words in the free group of rank 2}, Sib. Math.~J., 62 (2021), 1, 123--130.

\bibitem{A modified proof for Higman}
V.\,H. Mikaelian,
\textit{A modified proof for Higman's embedding theorem},
\href{https://arxiv.org/abs/1908.10153}{arXiv:1908.10153}

\bibitem{The Higman operations and  embeddings}
V.\,H.~Mikaelian,
\textit{The Higman operations and  embeddings of recursive groups},
J. Group Theory, 26 (2023), 6, 1067--1093, 
\href{https://arxiv.org/abs/2002.09728}{arXiv:2002.09728}

\bibitem{On explicit embeddings of Q}
V.\,H. Mikaelian,
\textit{On explicit embeddings of $\Q$ into finitely presented groups},
J. Group Theory, to appear in 2025,
\href{https://arxiv.org/abs/2310.10536}{arXiv:2310.10536}

\bibitem{Higman's reversing operation}
V.\,H. Mikaelian,
\textit{Explicit construction of benign subgroup for Higman's reversing operation},
in preparation, \href{http://arxiv.org/abs/2506.12442}{arXiv:2506.12442}

\bibitem{An explicit algorithm}
V.\,H. Mikaelian,
\textit{An explicit algorithm for the Higman Embedding Theorem},
in preparation.

\bibitem{The conjugacy problem and Higman embeddings}
A.Yu  Ol'shanskii, M. Sapir,
\textit{The conjugacy problem and Higman embeddings}, 
Mem. Amer. Math. Soc. 170 (2004).

\bibitem{Robinson}
D.J.S. Robinson,  
\textit{A Course in the Theory of Groups}, 2nd ed., 
New York,  Berlin,  Heidelberg Springer-Verlag (1996). 

\bibitem{Rotman}
J.J. Rotman, 
{\it  An introduction to the theory of groups}, 
4th ed., 
Graduate Texts in Mathematics. 148. New York, Springer-Verlag (1995). 

\end{thebibliography}
\end{document}